\pdfoutput=1
\documentclass[11pt]{article}
 
\usepackage[margin=1in]{geometry}
\usepackage{amsmath,amssymb,amsthm,mathtools}
\usepackage{booktabs}
\usepackage{array}
\usepackage{microtype}
\usepackage{xcolor}
\usepackage[colorlinks=true,linkcolor=blue!50!black,citecolor=blue!50!black,urlcolor=blue!50!black]{hyperref}
 
\usepackage[backend=biber,
            style=numeric-comp,
            sorting=nyt,
            maxbibnames=6,
            giveninits=true,
            doi=true, url=true, eprint=true]{biblatex}
\DeclareFieldFormat{titlecase}{#1}
 
\theoremstyle{plain}
\newtheorem{theorem}{Theorem}[section]
\newtheorem{proposition}[theorem]{Proposition}
\newtheorem{lemma}[theorem]{Lemma}
\newtheorem{corollary}[theorem]{Corollary}
\newtheorem{conjecture}[theorem]{Conjecture}
\theoremstyle{definition}
\newtheorem{definition}[theorem]{Definition}

\newtheorem{question}[theorem]{Question}
\theoremstyle{remark}
\newtheorem{remark}[theorem]{Remark}
 
\newcommand{\HJ}{\mathrm{HJ}}
\newcommand{\Z}{\mathbb{Z}}

\DeclareMathOperator{\type}{type}
\newcommand{\ksum}{\kappa_{\mathrm{sum}}}
\newcommand{\kow}{\kappa_{\mathrm{ow}}}
\newcommand{\ksym}{\kappa_{\mathrm{sym}}}
\newcommand{\kall}{\kappa_{\mathrm{all}}}

\title{\bfseries One-Weight Colorings, the Symmetric Class, and\\
Lower Bounds for Hales--Jewett Numbers}
\author{Younes Mouhib\thanks{Department of Mathematics, ETH Z\"urich.
This note extracts and consolidates material from the author's MSc
thesis~\cite{thesis}, written under the supervision of Prof.\ Dr.\ L.\
Halbeisen; the records also appear in~\cite{mh}.}}
\date{July 2026}
 
\begin{document}
\maketitle
 
\begin{abstract}
A coloring of the Hales--Jewett cube $[t]^n$ is \emph{symmetric} if it is
invariant under all coordinate permutations, and \emph{one-weight} if it reads
only an integer-weighted count of the letters. We prove that the two classes
coincide --- a radix weight realizes every symmetric coloring --- so the
symmetric lower-bound problem for the Hales--Jewett numbers is exactly a
one-dimensional coloring problem about homothetic copies of a $t$-point set,
the case $d=1$ of Gallai's theorem. Optimizing the weight yields
$\HJ(3,3)\ge22$ and $\HJ(4,2)\ge14$, the latter in closed form from the new
Gallai homothety numbers $G_2(\{0,2,3,5\})=67$ and $G_2(\{0,1,5,6\})=80$; new
values at three colors --- $G_3(\{0,1,3\})=42$, $G_3(\{0,1,4\})=57$ and
$G_3(\{0,2,5\})\ge77$ --- give $\HJ(3,3)\ge16$ from a one-line
certificate. An anatomy of the $(4,2)$ palette locates the source of its
compression: it is an extremal object of the bracket regime plus a single
boundary scale. An
exhaustive census shows how thin the class is: of the $1644$ line-free
$2$-colorings of $[3]^3$, exactly $36$ are symmetric. For lines with at most
$K$ active coordinates the same machinery gives infinite \emph{bracket}
numbers, $\HJ^{[12]}(3,3)=\HJ^{[12]}(4,2)=\infty$, strictly beyond the
sum-type ceilings $\ksum(3,3)=11$ and $\ksum(4,2)=10$; for lines whose active
set is an interval the machinery is provably blind, the interval ceiling
$\lambda(3,r)$ is settled for every $r$ by assembling the known bounds,
and a SAT computation gives the exact value $\HJ^{(1)}(3)=5>4=\HJ(3)$. We close with the Collapse, diagonal-only, and
symmetric-extremality conjectures and with open problems on optimal weights.
Every certificate displayed in this note has been re-verified by direct
enumeration, independently of any solver.
\end{abstract}
 
\medskip
\noindent\textit{2020 Mathematics Subject Classification:} 05D10.\quad
\textit{Keywords:} Hales--Jewett numbers, van der Waerden numbers, symmetric
colorings, Gallai's theorem, SAT solving.
 
\section{Introduction}\label{sec:intro}
 
Let $[t]=\{1,\dots,t\}$. A \emph{root} is a word
$\tau\in([t]\cup\{*\})^n\setminus[t]^n$; substituting $a\in[t]$ for every $*$
gives $\tau(a)\in[t]^n$, and the \emph{combinatorial line} generated by $\tau$
is $L_\tau=\{\tau(1),\dots,\tau(t)\}$. The coordinates of $\tau$ carrying $*$
form its \emph{active set} $I(\tau)\subseteq[n]$.
 
\begin{theorem}[Hales--Jewett~\cite{HJ1963}]
For all $t,r\ge1$ there is an $n$ such that every $r$-coloring of $[t]^n$
contains a monochromatic combinatorial line.
\end{theorem}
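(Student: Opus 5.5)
I would prove the Hales--Jewett theorem by Shelah's argument, since it gives the cleanest induction and the best bounds. The induction is on $t$, with $r$ arbitrary. For $t=1$ the statement is trivial: any root of length $n=1$ gives a one-point line, and a one-point line is monochromatic. So assume $\HJ(t-1,r')$ exists for every $r'$; we must produce a dimension $n$ that works for $t$ and $r$.

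The key tool is Shelah's insertion lemma. Let $N=\HJ(t-1,r)$ and split the coordinates into $N$ consecutive blocks of lengths $n_1,\dots,n_N$. The lemma states that these lengths can be chosen so that, for every $r$-coloring $\chi$ of $[t]^n$ with $n=\sum n_i$, there are roots $\tau_i$ on the blocks with the following property. Each $\tau_i$ has the shape $x_i\ast^{b_i}y_i$, meaning a single interval of $*$'s, with $x_i$ a constant $(t-1)$-prefix and $y_i$ a constant $t$-suffix. Consider words $w$ formed by substituting, in each block, either $t-1$ or $t$ for the $*$'s of $\tau_i$. Then the color of $w$ does not depend on whether block $i$ receives $t-1$ or $t$.

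To build the blocks I would take $n_N$ first and then choose each earlier length much larger than the number of colorings of everything later. For instance, choose $n_i$ so that $n_i+1 > r^{t^{\sum_{j\neq i} n_j}}$; this is achievable by defining the lengths backwards with tower-type growth, each $n_i$ dominating the color-pattern count of the other blocks. Inside block $i$, consider the $n_i+1$ "staircase" words $(t-1)^k t^{n_i-k}$. Each induces a coloring of the remaining coordinates. By pigeonhole, two indices $k<k'$ induce the same coloring. Setting $x_i=(t-1)^k$, $b_i=k'-k$ and $y_i=t^{n_i-k'}$ gives the required root. The step needing care is the ordering: the blocks must be fixed one at a time, each conditioned on the roots already fixed in the later blocks. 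The cleanest way to arrange this is to process $i=N,\dots,1$ while keeping full freedom in the unprocessed blocks.

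With the lemma in hand, define a coloring $\chi'$ of $[t-1]^N$. A word $u$ is sent to $\chi$ of the word obtained by substituting $u_i$ into $\tau_i$ for each $i$. By the choice of $N$ there is a $\chi'$-monochromatic combinatorial line, with root $\sigma$ over $[t-1]$. Lift it by substituting $\tau_i$ at each coordinate of $\sigma$; this gives a combinatorial line in $[t]^n$ whose active set is the union of the starred intervals of the active blocks. Its first $t-1$ points are monochromatic because they correspond to the $\chi'$-line. The $t$-th point differs from the $(t-1)$-th only by replacing $t-1$ with $t$ in active blocks. By insertion-invariance, applied block by block, it has the same color, which completes the induction.

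I expect the insertion lemma to be the main obstacle, specifically setting up the pigeonhole so that the invariance holds jointly for all blocks rather than one at a time. Proving it in the "colorings of the rest" form above, with the rest ranging over all $t$-words, gives a strong enough uniformity that the joint statement follows by swapping one block at a time.
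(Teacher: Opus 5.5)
The paper does not prove this theorem; it quotes it from \cite{HJ1963} and cites Shelah's argument \cite{s.proof_of_HJ} only for its bounds. Your proposal therefore has to stand on its own. Its architecture is the right one: a staircase pigeonhole inside each block, the induced coloring of $[t-1]^N$, lifting the line, and swapping $t-1$ for $t$ one active block at a time. But the quantitative step you yourself flag as delicate fails as written.

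\textbf{The gap.} The condition $n_i+1>r^{t^{\sum_{j\ne i}n_j}}$ cannot hold for all $i$ once $N\ge2$ and $r,t\ge2$. It would force both $n_1+1>r^{t^{n_2}}\ge n_2+2$ and $n_2+1>r^{t^{n_1}}\ge n_1+2$. For the same reason, the uniformity you appeal to at the end (invariance at block $i$ with all other coordinates ranging over arbitrary $t$-words) cannot be obtained by this pigeonhole. Your ordering is also mismatched. You choose the lengths from $n_N$ backwards so that earlier blocks dominate later ones, yet you process $i=N$ first with blocks $1,\dots,N-1$ still free. The very first pigeonhole would then need the shortest block to satisfy $n_N+1>r^{t^{n_1+\cdots+n_{N-1}}}$, which fails.

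\textbf{The fix.} The missing idea is that once a block's root is fixed, it contributes only its $t$ substitution instances $\tau_j(1),\dots,\tau_j(t)$, not $t^{n_j}$ words. The growth of the lengths must also follow the processing order. Keeping your order $i=N,\dots,1$, choose $n_1,n_2,\dots,n_N$ \emph{forwards} with
\[
n_i+1\;>\;r^{\,t^{\,N-i}\cdot t^{\,n_1+\cdots+n_{i-1}}},
\]
so that at step $i$ the functions
\[
F_k(z,a_{i+1},\dots,a_N)=\chi\bigl(z\,(t-1)^k t^{n_i-k}\,\tau_{i+1}(a_{i+1})\cdots\tau_N(a_N)\bigr),
\]
with $z$ a word on the still-free blocks $1,\dots,i-1$ and $a_j\in[t]$, take at most the displayed number of values. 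Hence two staircase indices $k<k'$ give $F_k=F_{k'}$. The roots later chosen on blocks $1,\dots,i-1$ yield particular words $z$, so the invariance obtained holds for every $u\in[t]^N$ under $\chi'(u)=\chi(\tau_1(u_1)\cdots\tau_N(u_N))$.

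This full-$[t]^N$ form is what the lift actually needs. The inactive blocks of the lifted line carry $\tau_j(a)$ with arbitrary $a\in[t-1]$, not only $t-1$ or $t$. So your stated version of the lemma, in which every block receives $t-1$ or $t$, is too weak for the last step. With this correction the rest of your argument goes through.
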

 
The least such $n$ is the Hales--Jewett number $\HJ(t,r)$, with
$\HJ(t):=\HJ(t,2)$. Exact values are scarce: $\HJ(2,r)=r$
(Proposition~\ref{prop:small-interval}), and $\HJ(3)=4$ by Hindman and
Tressler~\cite{firstnontrivialHJ_is_4}. For $t\ge4$ no value is known. On the
upper side, Shelah's proof~\cite{s.proof_of_HJ} (see
also~\cite{jungic_ramsey_notes,golshanimirabi2021})
gives primitive-recursive bounds, Lavrov~\cite{lavrov} proved
$\HJ(4,2)<10^{11}$, and Conlon~\cite{conlon2021monochromaticcombinatoriallineslength}
proved $\HJ(3,r)\le2^{2^{cr}}$ for an absolute constant $c$. On the lower
side, the classical route is through van der Waerden's
theorem~\cite{vanderWaerden1927}: since the letter sum maps a line with $k$
active coordinates onto a $t$-term arithmetic progression of gap $k$,
\begin{equation}\label{eq:vdw-shadow}
\HJ(t,r)\ \ge\ \Bigl\lceil\tfrac{W(t,r)-1}{t-1}\Bigr\rceil ,
\end{equation}
where $W(t,r)$ is the van der Waerden number. Combined with the bounds of
Kozik--Shabanov~\cite{kozik2016}, of Blankenship--Cummings--Taranchuk
\cite{BLANKENSHIP2018163} ($W(p+1,r)>p^{r-1}2^p$ for primes $p\ge r$,
generalizing Berlekamp~\cite{Berlekamp1968}), this gives
$\HJ(t)=\Omega(2^t/t)$ and, e.g., $\HJ(5,2)\ge45$, $\HJ(6,2)\ge227$ from
$W(5,2)=178$, $W(6,2)=1132$~\cite{LandmanRobertson}.
 
This note isolates a single mechanism behind stronger lower bounds and behind
several structural results around them: the \emph{one-weight coloring}, which
colors a word by an integer-weighted count of its letters. The contributions
are organized as follows.
 
Section~\ref{sec:type} sets up the type map onto the discrete simplex and
quantifies how few symmetric colorings there are, including an exhaustive
census of the critical cube $[3]^3$: of its $1644$ line-free $2$-colorings,
exactly $36$ are symmetric. Section~\ref{sec:ow} develops the one-weight
machinery --- the corner-tuple reduction and the line--pattern correspondence
--- and proves the characterization: \emph{one-weight colorings are exactly
the symmetric colorings} (Theorem~\ref{thm:radix}), so the apparent hierarchy
of weight ranks is flat. Section~\ref{sec:gallai} reads the resulting
lower-bound problem as the one-dimensional Gallai theorem, extracts a
closed-form bound through the Gallai homothety numbers $G_r(S)$, and charts
the ratio landscape at small parameters: weight-independent and tight at
$(3,2)$, and lifted from $13$ to $16$ at $(3,3)$ by new Gallai numbers at
three colors.
Section~\ref{sec:records} proves the records $\HJ(3,3)\ge22$ and
$\HJ(4,2)\ge14$, each by a single weight, dissects the $(4,2)$ palette into
an extremal bracket object plus one boundary scale, and contrasts it with
$(3,3)$, where no comparably compressed witness is known. Section~\ref{sec:bracket} develops
the bracket numbers $\HJ^{[K]}$ and their ceilings and proves
$\HJ^{[12]}(3,3)=\HJ^{[12]}(4,2)=\infty$ by periodic one-weight palettes,
strictly beyond the sum-type ceilings $\ksum(3,3)=11$, $\ksum(4,2)=10$.
Section~\ref{sec:interval} treats the interval numbers $\HJ^{(q)}$, to which
the entire symmetric machinery is provably blind, assembles the literature
into the exact ceiling $\lambda(3,r)$ for every $r$
(Proposition~\ref{prop:lambda3}), and establishes $\HJ^{(1)}(3)=5$. Section~\ref{sec:open} collects the conjectures and open
problems: Collapse, diagonal-only colorings, symmetric extremality, and the
optimal-weight questions.
 
\emph{Verification.} Statements marked \emph{computer-assisted} rest on finite
computations. Every certificate displayed in this note --- the census of
Table~\ref{tab:census}, the palettes of Theorems~\ref{thm:hj42-14},
\ref{thm:hj12-33} and~\ref{thm:hj12-42}, the palette of
Proposition~\ref{prop:kstar}, the power-residue instances of
Theorem~\ref{thm:character} at $p=11,37,97,139$, the Gallai numbers of
Table~\ref{tab:gallai}, the interval certificate of
Theorem~\ref{thm:hj33-16}, the periodic palette of
Proposition~\ref{prop:hj33-15}, and the witness underlying
Theorem~\ref{thm:hj13}
--- has been re-verified for this note by direct enumeration from the
definitions, independently of any solver. The one certificate too large to
display, the $253$-cell witness of Theorem~\ref{thm:hj33-22}, is recorded
in~\cite{thesis} together with a dependency-free verifier. Avoidance
certificates are thus verified by direct enumeration; \emph{unsatisfiability}
(forcing) claims necessarily rest on SAT solvers; where stated, the number
of independent solvers used is given.
 
\section{The Type Map and the Scarcity of Symmetric Colorings}\label{sec:type}
 
Write $\sigma(w)=\sum_{i=1}^n w_i$ for the \emph{weight} of $w\in[t]^n$, and
\[
\type(w)=(a_1,\dots,a_t),\qquad a_j=|\{i: w_i=j\}| ,
\]
for its \emph{type}. Since $\sum_j a_j=n$, the type ranges over the discrete
simplex
\[
T_n^{(t)}=\Bigl\{a\in\Z_{\ge0}^{t}:\ \textstyle\sum_{j}a_j=n\Bigr\},
\qquad
\bigl|T_n^{(t)}\bigr|=\binom{n+t-1}{t-1},
\]
of size polynomial in $n$, against the $t^n$ words of the cube. The symmetric
group $S_n$ acts on $[t]^n$ by permuting coordinates; a coloring is
\emph{symmetric} if it is invariant under this action.
 
\begin{lemma}\label{lem:sym}
The $S_n$-orbits of $[t]^n$ are exactly the fibers of $\type$. Consequently a
coloring $c$ is symmetric if and only if it factors as
$c=\bar c\circ\type$ through a unique $\bar c\colon T_n^{(t)}\to[r]$, called
its \emph{descent}.
\end{lemma}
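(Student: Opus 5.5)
The plan is to prove the orbit statement by two inclusions and then read off the factorization from it. Throughout, write $\pi\cdot w$ for the action of $\pi\in S_n$, defined by $(\pi\cdot w)_i=w_{\pi^{-1}(i)}$.

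First, $\type$ is constant on orbits. For each letter $j\in[t]$, the coordinate map $\pi$ restricts to a bijection
\[
\{i: w_i=j\}\ \to\ \{i:(\pi\cdot w)_i=j\},
\]
so the counts $a_j$ agree and $\type(\pi\cdot w)=\type(w)$.

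Second, any two words of the same type lie in the same orbit. Let $\type(w)=\type(w')=(a_1,\dots,a_t)$. For each $j$ the sets $A_j=\{i:w_i=j\}$ and $A'_j=\{i:w'_i=j\}$ have the same size $a_j$. The $A_j$ partition $[n]$, and so do the $A'_j$. Choose bijections $A_j\to A'_j$ and glue them into one $\pi\in S_n$. Then $\pi\cdot w=w'$.

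Together these show that the orbits are exactly the nonempty fibers of $\type$. To get the clean statement we also need $\type\colon[t]^n\to T_n^{(t)}$ to be surjective. Given $a\in T_n^{(t)}$, the word consisting of $a_1$ ones, then $a_2$ twos, and so on, has type $a$.

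The factorization then follows.
\begin{itemize}
\item If $c$ is symmetric, it is constant on orbits, hence constant on the fibers of $\type$. So $\bar c(a):=c(w)$ for any $w$ with $\type(w)=a$ is well defined.
\item $\bar c$ is unique because $\type$ is surjective.
\item Conversely, if $c=\bar c\circ\type$, then $c$ is invariant under $S_n$ by the first inclusion.
\end{itemize}

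No step is a real obstacle. The only point that needs care is the gluing of the bijections into a permutation, which uses that both families of sets partition $[n]$. Surjectivity of $\type$ should be stated explicitly, since uniqueness of the descent depends on it.
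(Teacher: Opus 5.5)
Your proof is correct and follows the paper's argument essentially step for step: $\type$ is invariant because a permutation carries each level set $\{i:w_i=j\}$ bijectively onto the matching level set of the permuted word, and words of equal type are joined by a permutation that matches their equinumerous level sets. Uniqueness of the descent comes, as in the paper, from surjectivity of $\type$ via the word $1^{a_1}\cdots t^{a_t}$; you simply make the action and the gluing of the bijections explicit.
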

 
\begin{proof}
A coordinate permutation preserves the multiset of letters, so $\type$ is
constant on orbits. Conversely, if $\type(w)=\type(w')$ the level sets
$\{i:w_i=j\}$ are equinumerous for each $j$, and any permutation matching them
carries $w$ to $w'$. Symmetry is thus constancy on fibers, i.e.\
factorization through $\type$; uniqueness holds because $\type$ is onto,
realized by $1^{a_1}\cdots t^{a_t}$.
\end{proof}
 
Symmetric colorings are therefore rare in a strong quantitative sense: there
are $r^{\binom{n+t-1}{t-1}}$ of them among the $r^{t^n}$ colorings of the
cube, a fraction $r^{\binom{n+t-1}{t-1}-t^n}$ that vanishes doubly
exponentially in $n$. The scarcity persists among the extremal colorings. At
$(t,r)=(3,2)$ the cube $[3]^3$ is the largest admitting a line-free
$2$-coloring ($\HJ(3,2)=4$~\cite{firstnontrivialHJ_is_4}), so its line-free
colorings admit a complete account.
 
\begin{theorem}[{Census of $[3]^3$; computer-verified}]\label{thm:census}
Exactly $1644$ of the $2^{27}$ two-colorings of $[3]^3$ are line-free. Their
distribution by coordinate stabilizer, and the subdivision of the symmetric
class by realizing weight \textup{(}Definition~\ref{def:ow}\textup{)}, is
given in Table~\ref{tab:census}. In particular only $36$ --- about one in $46$ ---
are symmetric.
\end{theorem}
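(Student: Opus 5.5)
The count itself is a finite computation, so the plan is to organize it so that its output can be checked independently. First, enumerate the combinatorial lines of $[3]^3$. A root is a word in $\{1,2,3,*\}^3$ with at least one $*$, so there are $4^3-3^3=37$ lines. Index the $27$ cells and record each line as a $27$-bit mask. A $2$-coloring is a $27$-bit vector $x$, and it is line-free exactly when no line mask $m$ has $x\wedge m=m$ (all of the line colored $1$) or $x\wedge m=0$ (all of the line colored $0$).

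Rather than brute-forcing all $2^{27}\approx1.3\times10^8$ vectors (which is itself feasible and would serve as a second, independent check), I would run a backtracking search over the cells in a fixed order. At each step I prune as soon as some line becomes fully assigned and monochromatic, and I count the leaves; the target is $1644$. For robustness I would run both methods, plain enumeration with bit tests and the pruned search, and require that they agree.

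For the finer data in Table~\ref{tab:census}, I would compute the coordinate stabilizer of each line-free coloring $c$, namely $\{\pi\in S_3: c\circ\pi=c\}$, by testing the six permutations, and then tabulate the results by conjugacy class of subgroup. By Lemma~\ref{lem:sym}, the symmetric colorings are exactly those with full stabilizer $S_3$. As a cross-check, I would enumerate the symmetric colorings directly as descents $\bar c\colon T_3^{(3)}\to[2]$. Since $|T_3^{(3)}|=10$, there are only $2^{10}=1024$ candidates. For each I would pull the descent back through $\type$ and test it against the $37$ lines, and the survivors should number exactly $36$, matching the stabilizer count. For the subdivision by realizing weight, I would search small integer weights $(w_1,w_2,w_3)$ and, for each symmetric line-free coloring, record which weights realize it as a one-weight coloring in the sense of Definition~\ref{def:ow}. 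The ratio $1644/36\approx45.7$ gives the stated ``one in $46$''.

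The main obstacle is not the enumeration, which is small, but making the count trustworthy. Two things need care. The line set must be exactly the $37$ roots, with no degenerate root omitted; each single-$*$ root such as $(1,*,2)$ counts as its own line. The stabilizer classification must also use coordinate permutations only, not letter permutations. I would validate the line set by confirming that every cell lies on $2^3-1=7$ lines, and I would cross-validate the total by recomputing it on a SAT solver via model counting.
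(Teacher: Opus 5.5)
Your plan is essentially the paper's: exhaustive enumeration of the $2^{27}$ colorings against the $37$ lines, classification by coordinate stabilizer, and an independent recount of the symmetric class on the ten-cell simplex $T_3^{(3)}$. You should also state that the orbit column then follows by orbit--stabilizer.

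There is one concrete error, in the step you offer as validation of the line set: it is not true that every cell lies on $2^3-1=7$ lines. A line $L_\tau$ passes through $w$ exactly when $w=\tau(a)$ for some $a$. Equivalently, $I(\tau)$ is a nonempty set of coordinates on which $w$ is constant, and $\tau$ agrees with $w$ off $I(\tau)$. So the number of lines through $w$ is $\sum_j(2^{a_j}-1)$ with $a=\type(w)$. This gives $7$ for each of the three constant words, $4$ for each of the eighteen words whose type is a permutation of $(2,1,0)$, and $3$ for each of the six words with three distinct letters. The total is $21+72+18=111=37\cdot 3$ incidences, not $27\cdot 7=189$. As written, your check would reject a correct list of lines, or tempt you to \emph{repair} it. Replace it by these per-type counts, or simply by the incidence total $111$.

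A small simplification for the weight subdivision: no open-ended weight search is needed. By Theorem~\ref{thm:radix}, the radix weight $(0,1,4)$ (with $p=4>n=3$) already realizes all $36$ symmetric colorings. The only remaining computation is which of them factor through $\sigma$. By Lemma~\ref{lem:1d}, these correspond to the $2$-colorings of $\{3,\dots,9\}$ with no monochromatic $3$-term progression, and there are $16$ of them. The remaining $20$ are the non-arithmetic row.
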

 
\begin{table}[ht]
\centering
\caption{The $1644$ line-free $2$-colorings of $[3]^3$, by coordinate
stabilizer in $S_3$ and by weight. Orbit counts follow from
orbit--stabilizer. All counts were obtained by exhaustive enumeration of the
$2^{27}$ colorings against the $37$ lines and re-verified independently; the
symmetric total was cross-checked on the ten-cell simplex $T_3^{(3)}$ via
Lemma~\ref{lem:reduction}.}
\label{tab:census}
\begin{tabular}{@{}llrr@{}}
\toprule
class & stabilizer / weight & number & orbits\\
\midrule
symmetric ($=$ one-weight) & $S_3$ & $36$ & $36$\\
\quad sum-type & weight $(1,2,3)$ & $16$ & $16$\\
\quad one-weight, non-arithmetic & e.g.\ $(0,1,3)$, $(0,2,3)$ & $20$ & $20$\\
\addlinespace
non-symmetric & $\subsetneq S_3$ & $1608$ & $360$\\
\quad block-symmetric & $C_2$ (one transposition) & $504$ & $168$\\
\quad cyclic & $C_3$ & $24$ & $12$\\
\quad asymmetric & trivial & $1080$ & $180$\\
\midrule
total & & $1644$ & $396$\\
\bottomrule
\end{tabular}
\end{table}

\begin{remark}[The rainbow dual measures the same scarcity]\label{rem:rainbow}
A symmetric coloring is constant on the $\binom{n+t-1}{t-1}$ type classes, so
it uses at most polynomially many colors; but the anti--Hales--Jewett
threshold --- the least $r$ forcing a rainbow line under every surjective
$r$-coloring --- exceeds $(t-1)^n$~\cite{zheng2024rainbow}. Hence for $t\ge3$
the largest symmetric rainbow-free coloring falls short of the truth by an
exponential factor: symmetry, which Section~\ref{sec:records} shows is
(conjecturally) the tight model for the monochromatic problem, is
exponentially lossy for the rainbow dual. (An explicit rainbow-free
$24$-coloring of $[3]^4$, re-verified against all $175$ lines, improves the
lower bound of~\cite{zheng2024rainbow} to $\mathrm{ah}(3,4)\ge25$;
see~\cite{mh}. The rainbow twin of the reduction~\eqref{eq:vdw-shadow} runs
to anti--van der Waerden numbers~\cite{berikkyzy2017antivdw}.)
\end{remark}
 
\section{One-Weight Colorings Are the Symmetric Colorings}\label{sec:ow}
 
\begin{definition}\label{def:ow}
A \emph{one-weight coloring} of $[t]^n$ is
\[
c_{\omega,\chi}(w)=\chi\bigl(\langle\omega,\type(w)\rangle\bigr),
\qquad
\langle\omega,a\rangle=\textstyle\sum_{j}\omega_j a_j ,
\]
given by a \emph{weight} $\omega\in\Z^t$ and a \emph{palette}
$\chi\colon\Z\to[r]$ (taken $m$-periodic when so indicated). The
\emph{sum-type} colorings are the case $\omega=(1,2,\dots,t)$, where
$\langle\omega,\type(w)\rangle=\sigma(w)$.
\end{definition}
 
\begin{remark}\label{rem:affine}
Weights $\omega$ and $\alpha\omega+\beta\mathbf1$ ($\alpha\ne0$) define the
same family, since
$\langle\alpha\omega+\beta\mathbf1,\type(w)\rangle
=\alpha\langle\omega,\type(w)\rangle+\beta n$ is an affine reparametrization
absorbed into $\chi$. Thus $\omega$ matters only up to scaling and
translation; in particular at $t=2$ every nonconstant weight is affinely
$(1,2)$, so one-weight, sum-type and (by Theorem~\ref{thm:radix}) symmetric
all coincide there.
\end{remark}
 
Every one-weight coloring factors through $\type$, hence is symmetric, and
sum-type colorings are one-weight, giving classes
\[
\mathrm{sum}\ \subseteq\ \mathrm{ow}\ \subseteq\ \mathrm{sym}\ \subseteq\
\mathrm{all},
\]
the last inclusion strict for $n,r\ge2$.
For a class $\mathcal C$ of colorings, $\HJ_{\mathcal C}(t,r)$ denotes the
least $n$ such that \emph{every} $\mathcal C$-coloring of $[t]^n$ has a
monochromatic line; shrinking $\mathcal C$ relaxes the quantifier, so
$\HJ_{\mathrm{sum}}\le\HJ_{\mathrm{ow}}\le\HJ_{\mathrm{sym}}
\le\HJ_{\mathrm{all}}=\HJ(t,r)$.
 
The mechanism underlying everything below is a pair of reductions. For
$1\le k\le n$ and $v\in T^{(t)}_{n-k}$ write
\[
C_{k,v}=\{v+ke_1,\dots,v+ke_t\}\subseteq T_n^{(t)}
\]
for the \emph{corner tuple} of scale $k$ and base $v$ ($e_a$ the standard
basis of $\Z^t$), and, for $K\ge1$, write $L^{[K]}([t]^n)$ for the set of
lines with at most $K$ active coordinates.
 
\begin{lemma}[Symmetric reduction]\label{lem:reduction}
Let $c=\bar c\circ\type$ be symmetric (Lemma~\ref{lem:sym}) and let $\tau$ be a root with
$k=|I(\tau)|$ active coordinates and inactive type $v\in T^{(t)}_{n-k}$. Then
\begin{equation}\label{eq:type-identity}
\type(\tau(a))=v+ke_a\qquad(a\in[t]),
\end{equation}
so $L_\tau$ is monochromatic under $c$ iff $C_{k,v}$ is monochromatic under
$\bar c$. For every $1\le K\le n$ the following are equivalent:
\textup{(i)} no line of $L^{[K]}([t]^n)$ is monochromatic under $c$;
\textup{(ii)} no corner tuple $C_{k,v}$ with $1\le k\le K$ is monochromatic
under $\bar c$. At $K=n$: $c$ is line-free iff no corner tuple is
monochromatic.
\end{lemma}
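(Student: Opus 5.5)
The plan is to prove the type identity~\eqref{eq:type-identity} by a direct count. We then deduce the monochromaticity equivalence for a single line, and finally obtain (i)$\Leftrightarrow$(ii) by showing that every corner tuple of scale at most $K$ arises from some root.

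\emph{Step 1: the identity.} Fix a root $\tau$ with active set $I=I(\tau)$, $|I|=k$, and let $v=\type(\tau|_{[n]\setminus I})\in T^{(t)}_{n-k}$ be the type of the inactive coordinates. For $a\in[t]$ and each letter $j$, the word $\tau(a)$ has $j$ in position $i$ iff either $i\notin I$ and $\tau_i=j$, or $i\in I$ and $j=a$. Counting both contributions gives $\type(\tau(a))_j=v_j+k\,[j=a]$, which is exactly $v+ke_a$. Since $c=\bar c\circ\type$ by Lemma~\ref{lem:sym}, we get $c(\tau(a))=\bar c(v+ke_a)$. Hence $L_\tau$ is monochromatic under $c$ iff $C_{k,v}$ is monochromatic under $\bar c$. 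The points $\tau(a)$ are distinct because $k\ge1$, and likewise the $v+ke_a$ are distinct, so the correspondence is between honest $t$-element sets.

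\emph{Step 2: surjectivity onto corner tuples.} Conversely, given $1\le k\le n$ and $v\in T^{(t)}_{n-k}$, we realize $C_{k,v}$ by a root. Take
\[
\tau=1^{v_1}2^{v_2}\cdots t^{v_t}\,*^{k},
\]
whose inactive type is $v$ and whose active set has size $k$. By Step~1, $\{\type(\tau(a))\}_a=C_{k,v}$. So every corner tuple of scale $k$ is the type image of a line with exactly $k$ active coordinates.

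\emph{Step 3: the equivalence.} For (ii)$\Rightarrow$(i): a monochromatic line in $L^{[K]}([t]^n)$ has some $k=|I(\tau)|\le K$, and Step~1 turns it into a monochromatic $C_{k,v}$ with $k\le K$. For (i)$\Rightarrow$(ii): a monochromatic $C_{k,v}$ with $k\le K$ lifts via the root of Step~2 to a monochromatic line with $k\le K$ active coordinates. The case $K=n$ follows because every line has between $1$ and $n$ active coordinates. There is no real obstacle here. The only points needing care are the bookkeeping that the inactive type lies in $T^{(t)}_{n-k}$ (its entries sum to $n-k$) and the requirement $k\ge1$, which is guaranteed since roots lie outside $[t]^n$.
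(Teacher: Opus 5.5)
Your proposal is correct and follows essentially the same route as the paper: you establish the type identity by counting active and inactive coordinates, then show every pair $(k,v)$ with $k\le K$ is realized by an explicit root (yours puts the stars last, $1^{v_1}\cdots t^{v_t}{*}^{k}$, where the paper writes ${*}^{k}1^{v_1}\cdots t^{v_t}$). Your Step~3 simply writes out the contrapositive bookkeeping that the paper leaves implicit.
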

 
\begin{proof}
In $\tau(a)$ the $k$ active coordinates carry the letter $a$ and the inactive
ones contribute $v$, giving~\eqref{eq:type-identity}. Every line of $L^{[K]}$
realizes some pair $(k,v)$ with $k\le K$, and conversely every such pair is
realized by the root ${*}^{k}1^{v_1}\cdots t^{v_t}$.
\end{proof}
 
\begin{remark}[Compression]\label{rem:compression}
The number of corner tuples with $1\le k\le K$ is
$\sum_{k=1}^{K}\binom{n-k+t-1}{t-1}$, equal to $\binom{n+t-1}{t}$ at $K=n$.
Thus the ${\approx}1.3\times10^{8}$ points and ${\approx}1.7\times10^{10}$
lines of $[3]^{17}$ compress to $171$ cells and $969$ tuples, and the
$16{,}777{,}216$ points of $[4]^{12}$ to $455$ cells and $1365$ tuples. A
lower-bound witness is published in \cite{mh} as an explicit table on the cells; its proof
is Lemma~\ref{lem:reduction} together with a finite check of the corner
tuples, independent of any solver.
\end{remark}
 
For one-weight colorings the corner condition collapses by one further
dimension, onto $\Z$ itself. Write $S_\omega=\{\omega_1,\dots,\omega_t\}$.
 
\begin{lemma}[Line--pattern correspondence]\label{lem:pattern}
Under $c_{\omega,\chi}$, a line with $k$ active coordinates and inactive type
$v$, with $b:=\langle\omega,v\rangle$, carries the levels
\[
H_\omega(k,b)\ =\ b+k\,S_\omega\ \subseteq\ \Z,
\]
a scale-$k$ homothet of the weight set based at $b$. Consequently
$c_{\omega,\chi}$ has no monochromatic line in $L^{[K]}([t]^n)$ iff $\chi$ has
no monochromatic homothet $b+kS_\omega$ with $1\le k\le K$ and
$b\in\langle\omega,T^{(t)}_{n-k}\rangle$.
\end{lemma}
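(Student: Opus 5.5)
The plan is to compose the type identity of Lemma~\ref{lem:reduction} with the linear functional $a\mapsto\langle\omega,a\rangle$, and then read off the avoidance criterion level by level on $\Z$.

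\textbf{Step 1: the levels of a line.} Let $\tau$ be a root with $k=|I(\tau)|$ active coordinates and inactive type $v\in T^{(t)}_{n-k}$. By~\eqref{eq:type-identity}, $\type(\tau(a))=v+ke_a$ for each $a\in[t]$. Applying $\langle\omega,\cdot\rangle$ and using linearity gives
\[
\langle\omega,\type(\tau(a))\rangle=\langle\omega,v\rangle+k\,\omega_a=b+k\omega_a .
\]
As $a$ ranges over $[t]$, the levels carried by $L_\tau$ are therefore exactly $b+kS_\omega=H_\omega(k,b)$. Then $L_\tau$ is monochromatic under $c_{\omega,\chi}$ iff $\chi$ is constant on $H_\omega(k,b)$. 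If some $\omega_a$ coincide, the homothet has fewer than $t$ points, and the equivalence still holds as a statement about sets.

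\textbf{Step 2: the criterion for $L^{[K]}$.} For the forward direction, suppose a line in $L^{[K]}$ is monochromatic. It realizes some pair $(k,v)$ with $1\le k\le K$, and $b=\langle\omega,v\rangle$ lies in $\langle\omega,T^{(t)}_{n-k}\rangle$. By Step 1, $b+kS_\omega$ is a monochromatic homothet of the admissible form. For the converse, take a monochromatic homothet $b+kS_\omega$ with $1\le k\le K$ and $b=\langle\omega,v\rangle$ for some $v\in T^{(t)}_{n-k}$. The root ${*}^k1^{v_1}\cdots t^{v_t}$ from the proof of Lemma~\ref{lem:reduction} realizes $(k,v)$. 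By Step 1 its line carries the levels $b+kS_\omega$, so it is monochromatic. Taking contrapositives in both directions gives the stated equivalence.

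\textbf{Expected obstacle.} The only subtle point is the bookkeeping in the converse. A given base $b$ may arise from several $v$, so the proof must pick a genuine witness $v$ in the correct simplex $T^{(t)}_{n-k}$, with $n-k\ge0$ holding because $k\le K\le n$. Restricting $b$ to the set $\langle\omega,T^{(t)}_{n-k}\rangle$ is precisely what makes this choice possible. Everything else is linearity.
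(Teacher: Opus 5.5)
Your proposal is correct and follows the paper's proof. You apply $\langle\omega,\cdot\rangle$ to the type identity~\eqref{eq:type-identity} to get the levels $b+k\omega_a$, then obtain the $L^{[K]}$ criterion from the fact that the root ${*}^k1^{v_1}\cdots t^{v_t}$ realizes every pair $(k,v)$, as in Lemma~\ref{lem:reduction}; you simply spell out the two directions of the equivalence that the paper leaves implicit.
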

 
\begin{proof}
By~\eqref{eq:type-identity} the $a$-th word has level
$\langle\omega,v+ke_a\rangle=b+k\omega_a$; a one-weight coloring is constant
on $\langle\omega,\cdot\rangle$-fibers, so the line is monochromatic iff
$\chi$ is constant on $H_\omega(k,b)$.
\end{proof}
 
One might expect the one-weight colorings to be a strict, simpler subclass of
the symmetric ones. In a fixed dimension they are not: allowing the weight to
grow with $n$ recovers the entire symmetric class.
 
\begin{theorem}[One-weight $=$ symmetric; the rank hierarchy is
flat]\label{thm:radix}
Fix $t\ge2$, $r\ge2$, $n\ge1$. The colorings of $[t]^n$ of the form
$c_{\omega,\chi}$ are exactly the symmetric colorings. Specifically, for the
\emph{radix weight}
\[
\omega^\ast=(0,1,p,p^2,\dots,p^{t-2})\qquad\text{with an arbitrary integer } p>n
\]
(primality plays no role),
the functional $\langle\omega^\ast,\cdot\rangle$ is injective on $T_n^{(t)}$
and carries each corner tuple $C_{k,v}$ bijectively onto a carry-free lattice
corner; consequently every symmetric coloring $\bar c$ is realized as
$c_{\omega^\ast,\chi}$ with $\chi(\langle\omega^\ast,v\rangle)=\bar c(v)$, and
$c_{\omega^\ast,\chi}$ is line-free iff no corner tuple is monochromatic
under $\bar c$ (Lemma~\ref{lem:reduction}). Moreover any coloring
built from finitely many weights
$w\mapsto\Psi(\langle\omega^{(1)},\type(w)\rangle,\dots,
\langle\omega^{(s)},\type(w)\rangle)$ is symmetric, hence one-weight: there
is no multi-weight tier, and
\[
\HJ_{\mathrm{ow}}(t,r)=\HJ_{\mathrm{sym}}(t,r).
\]
\end{theorem}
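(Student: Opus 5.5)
The plan is to prove the two inclusions separately, with injectivity of the radix functional as the key step. One direction is already noted before the theorem. Every $c_{\omega,\chi}$ factors through $\type$, so it is symmetric by Lemma~\ref{lem:sym}. The same observation settles the multi-weight clause: each $\langle\omega^{(i)},\type(w)\rangle$ is a function of $\type(w)$, hence so is $\Psi(\cdots)$, and the coloring is symmetric. That clause therefore reduces to the main claim, namely that every symmetric coloring is one-weight. Once that is shown, the classes $\mathrm{ow}$ and $\mathrm{sym}$ coincide as sets of colorings of $[t]^n$ for each $n$. Hence $\HJ_{\mathrm{ow}}(t,r)=\HJ_{\mathrm{sym}}(t,r)$ follows at once from the definition of $\HJ_{\mathcal C}$.

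The core step is injectivity of $\langle\omega^\ast,\cdot\rangle$ on $T_n^{(t)}$. For $a\in T_n^{(t)}$ we have
\[
\langle\omega^\ast,a\rangle=a_2+a_3p+\dots+a_tp^{t-2}.
\]
Each $a_j\le n<p$, so this is the base-$p$ expansion with digits $(a_2,\dots,a_t)$. Uniqueness of base-$p$ digits recovers $a_2,\dots,a_t$, and then $a_1=n-\sum_{j\ge2}a_j$ is forced. This is the one place where $p>n$ is used. The point to watch is the zero weight on letter $1$: the functional forgets $a_1$, and the constraint $\sum_j a_j=n$ restores it. Primality is never invoked.

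Given injectivity, define $\chi$ on the image $\langle\omega^\ast,T_n^{(t)}\rangle$ by $\chi(\langle\omega^\ast,v\rangle)=\bar c(v)$, and extend it arbitrarily (say by color $1$) elsewhere on $\Z$. Then $c_{\omega^\ast,\chi}=\chi\circ\langle\omega^\ast,\cdot\rangle\circ\type=\bar c\circ\type=c$.

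For the corner-tuple statement, take $C_{k,v}$ with $v\in T^{(t)}_{n-k}$. Its image is $b+kS_{\omega^\ast}$ with $b=\langle\omega^\ast,v\rangle$, as in Lemma~\ref{lem:pattern}. Every point $v+ke_a$ lies in $T_n^{(t)}$, so all its coordinates are at most $n<p$. Adding $k$ to digit $a$ therefore produces no carry, and the image consists of $t$ distinct integers by injectivity; this is the meaning of "carry-free lattice corner." The line-freeness equivalence is then Lemma~\ref{lem:reduction} at $K=n$, because the coloring equals $\bar c\circ\type$. I expect no real obstacle beyond checking the digit bound and the handling of $a_1$.
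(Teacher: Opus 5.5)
Your proposal is correct and follows essentially the same route as the paper. Both arguments get injectivity from the base-$p$ digit string $(a_2,\dots,a_t)$, with $a_1$ recovered from $\sum_j a_j=n$; both get the carry-free corner from $v_a+k\le n<p$, extend $\chi$ arbitrarily off the image, and handle the reverse inclusion and the multi-weight clause by factoring through $\type$ (the paper additionally spells out that $v+ke_1$ keeps the digit string of $v$ because $\omega^\ast_1=0$, which your remark on the zero weight already covers).
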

 
\begin{proof}
On $T_n^{(t)}$ each entry satisfies $0\le v_a\le n<p$ for $a\ge2$, so
$\langle\omega^\ast,v\rangle=\sum_{a\ge2}v_a\,p^{a-2}$ is the base-$p$ integer
with digit string $(v_2,\dots,v_t)$; it is injective because $v_1$ is
determined by $v_1=n-\sum_{a\ge2}v_a$. A corner point $v+ke_a$ with $a\ge2$
has $a$-digit $v_a+k\le n<p$ (as $v_a\le n-k$), so no carry occurs and
$\langle\omega^\ast,\cdot\rangle$ maps $C_{k,v}$ bijectively onto the digit
corner $\{(v_2,\dots,v_t)+k\hat e_a\}_{a\in[t]}$, where $\hat e_a$ is the
$a$-th standard basis vector of the digit space for $a\ge2$ and
$\hat e_1:=0$ (the point $v+ke_1$ keeps the digit string of $v$, since
$\omega^\ast_1=0$). Hence
$\chi(\langle\omega^\ast,v\rangle):=\bar c(v)$ is well defined (extended
arbitrarily off the image), $c_{\omega^\ast,\chi}=\bar c\circ\type$, and
line-freeness transfers by Lemma~\ref{lem:reduction}. Every $c_{\omega,\chi}$
is symmetric; a multi-weight coloring factors through $\type$, hence is
symmetric, hence one-weight by the above. Since the two classes coincide in
every dimension, the defining conditions of $\HJ_{\mathrm{ow}}$ and
$\HJ_{\mathrm{sym}}$ agree for each $n$, so the thresholds are equal.
\end{proof}
 
\begin{remark}[Finite reach versus all dimensions]\label{rem:caveat}
The radix weight depends on $n$ through $p>n$. Where a \emph{fixed} weight
and palette must avoid lines in \emph{every} dimension --- the bracket regime
$\HJ^{[K]}=\infty$ of Section~\ref{sec:bracket} --- the device is
unavailable, and there the one-weight class is a genuine restriction.
Theorem~\ref{thm:radix} is a statement about finite reach only. The
dependence on $n$ is moreover not an artifact of the proof:
$\langle\omega,\cdot\rangle$ maps $T_n^{(t)}$ into an interval of
$D_\omega n+1$ integers, $D_\omega=\max\omega-\min\omega$, so injectivity
forces $D_\omega\ge\bigl(\binom{n+t-1}{t-1}-1\bigr)/n=\Theta(n^{t-2})$ for
$t\ge3$; the radix diameter $p^{t-2}$ is optimal up to a factor $(t-1)!$,
and realizing the full symmetric class in dimension $n$ \emph{requires} a
weight of diameter polynomial in $n$.
\end{remark}
 
\section{The Gallai Reading and a Closed-Form Bound}\label{sec:gallai}
 
For a nonconstant weight $\omega$ define its \emph{reach} $n_\omega(t,r)$
as the largest $n\ge0$ for which some palette $\chi$ makes
$c_{\omega,\chi}$ line-free on $[t]^n$; by Lemma~\ref{lem:pattern} and
Hales--Jewett it is finite, and by Theorem~\ref{thm:radix} its optimum over
weights is $\HJ_{\mathrm{sym}}(t,r)-1$. Lemma~\ref{lem:pattern} identifies the
finiteness of $n_\omega$ with a classical statement. Recall
\emph{Gallai's theorem} in dimension one --- due to Gallai, first published by
Rado and independently by Witt; see~\cite{hans,LandmanRobertson} for
expositions: for every finite $S\subset\Z$ and every finite coloring of
$\Z$ there exist $b\in\Z$ and $k\ge1$ with $b+kS$ monochromatic.
 
\begin{theorem}[Gallai form of the one-weight bound]\label{thm:gallai-form}
Let $\omega\in\Z^t$ have value set $S=S_\omega$, $|S|=t$. For each $n$,
$c_{\omega,\chi}$ is line-free on $[t]^n$ iff $\chi$ has no monochromatic
homothet $b+kS$ with $1\le k\le n$ and
$b\in\langle\omega,T^{(t)}_{n-k}\rangle$. As $n\to\infty$ the scale $k$
ranges over all of $\Z_{\ge1}$ and, after the affine normalization
$\min S=0$ of Remark~\ref{rem:affine}, the base $b$ over the additive monoid
generated by $S$; that no finite coloring of $\Z$ avoids all such homothets
is precisely the case $d=1$ of Gallai's theorem with bases restricted to the
monoid --- a restriction that loses nothing --- and
$n_\omega(t,r)<\infty$ is that theorem read inside the cube.
\end{theorem}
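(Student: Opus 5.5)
The first assertion is Lemma~\ref{lem:pattern} at $K=n$. Every line of $[t]^n$ has some number $1\le k\le n$ of active coordinates and an inactive type $v\in T^{(t)}_{n-k}$. Lemma~\ref{lem:pattern} says its levels are exactly $b+kS$ with $b=\langle\omega,v\rangle$. So $c_{\omega,\chi}$ is line-free iff $\chi$ avoids monochromatic homothets with $1\le k\le n$ and $b\in\langle\omega,T^{(t)}_{n-k}\rangle$. Nothing beyond quoting the lemma is needed here.

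Next I describe the limiting parameter sets. I first normalize $\min S=0$ using Remark~\ref{rem:affine}: replacing $\omega$ by $\omega-(\min\omega)\mathbf1$ shifts every level by $-(\min\omega)n$, which $\chi$ absorbs. For fixed $k$ and $m=n-k$, the set $\langle\omega,T^{(t)}_m\rangle$ consists of sums of exactly $m$ elements of $S$ with repetition. Since $0\in S$, this equals the set of sums of at most $m$ elements. As $m\to\infty$ these sets increase to the monoid $M=\langle S\rangle$, and every scale $k\ge1$ eventually satisfies $k\le n$.

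Then I prove finiteness of $n_\omega$ by Gallai plus compactness. The plan has three steps.
\begin{enumerate}
\item \emph{Unrestricted bases.} Gallai's theorem in dimension one gives, for any finite coloring of $\Z$, a monochromatic $b+kS$ with $b\in\Z$ and $k\ge1$.
\item \emph{Removing the restriction to $M$.} This is the step I expect to be the main obstacle. Let $g=\gcd(S)$. After dividing by $g$ we may assume $g=1$; otherwise we work in $g\Z$ and use $b\in g\Z$. Then $M$ contains every integer $\ge N_0$ (the Frobenius bound), so it contains the ray $[N_0,\infty)$. Given a coloring $\chi$ of that ray, define $\chi'(x)=\chi(x+N_0)$ for $x\ge0$. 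Extend $\chi'$ to all of $\Z$ arbitrarily, or better apply Gallai in its finite form to the interval $[0,N)$. This yields a monochromatic $b'+kS\subset[0,N)$ with $b'\ge0$ (since $\min S=0$). Translating back gives a monochromatic $b+kS$ with $b=b'+N_0\in M$. So restricting bases to the monoid loses nothing.
\item \emph{Compactness.} The finite form of Gallai gives an $N=N(S,r)$ such that every $r$-coloring of $\{0,\dots,N-1\}$ contains a monochromatic $b+kS$. With the shift by $N_0$, every $r$-coloring of $[N_0,N_0+N)$ contains a monochromatic homothet with $b\in M$ and $k\le N$.
\end{enumerate}

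Finally I read this inside the cube. I choose $n$ large enough that $n\ge k+m$ for all the relevant $k$ and $b$. Here every relevant $b$ is at most $N_0+N$, so as a sum of elements of $S$ it uses at most $m=N_0+N$ summands, each nonzero element being $\ge1$. Also $k\le N$. So $n=2N+N_0$ suffices. Every such $(k,b)$ is then realized as $b\in\langle\omega,T^{(t)}_{n-k}\rangle$ by padding with zeros (the weight-$0$ letter). Thus no palette is line-free on $[t]^n$, and $n_\omega(t,r)<2N+N_0$. This is exactly the stated identification of $n_\omega<\infty$ with the $d=1$ Gallai theorem.
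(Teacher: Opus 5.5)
Your proposal is correct and follows essentially the paper's proof: Lemma~\ref{lem:pattern} at $K=n$, normalization $\min S=0$ with padding by the weight-$0$ letter to realize monoid bases, reduction by $g=\gcd(S\setminus\{0\})$, and a finite window shifted past the Frobenius number so that every base found lies in the monoid. The differences are cosmetic. The paper gets the homothet in the window $[N_0,N_0+W(\max S'+1,r)-1]$ directly from van der Waerden, since a monochromatic $(\max S'+1)$-term progression contains some $b'+kS'$; you use the finite form of Gallai via compactness, and you also make the dimension bound $n_\omega<2N+N_0$ explicit.
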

 
\begin{proof}
The equivalence is Lemma~\ref{lem:pattern} at $K=n$. For the limit: with
$\min S=0$, any element $m$ of the monoid generated by $S$ is
$\sum_{a}c_a\omega_a$ with $c_a\in\Z_{\ge0}$, realized as
$\langle\omega,v\rangle$ for $v\in T^{(t)}_{n-k}$ once $n-k\ge\sum_a c_a$,
padding the coordinate of weight $0$. Restricting the base to the monoid
loses nothing: let $g=\gcd(S\setminus\{0\})$ and $S=gS'$; the monoid
generated by $S'$ contains every integer beyond its Frobenius number
$N_0$. Given $\chi$, van der Waerden applied to the coloring
$x\mapsto\chi(gx)$ produces a monochromatic $(\max S'+1)$-term progression
inside the window $[N_0,\,N_0+W(\max S'+1,r)-1]$, hence a monochromatic
$b'+kS'$ with $b'\ge N_0$ in that monoid; then $gb'$ lies in the monoid
generated by $S$ and $gb'+kS=g(b'+kS')$ is monochromatic under $\chi$. The
converse implication is trivial, monoid-based homothets being homothets.
\end{proof}
 
The full-interval form of this avoidance yields a closed-form bound. Define
the \emph{Gallai homothety number} $G_r(S)$ as the least $N$ such that every
$r$-coloring of $N$ consecutive integers contains a monochromatic homothet
$b+kS$ ($k\ge1$) lying inside them. Normalizing $\min S=0$ and writing
$D=\max S$ for the diameter, $G_r(S)\le W(D+1,r)<\infty$, since a
monochromatic $(D+1)$-term progression $\{c,c+k,\dots,c+Dk\}$ contains
$c+kS$.
 
\begin{theorem}[Gallai-number lower bound]\label{thm:gallai-bound}
For every $S\subset\Z$ with $|S|=t$ and diameter $D$,
\[
\HJ(t,r)\ \ge\ \Bigl\lceil\tfrac{G_r(S)-1}{D}\Bigr\rceil,
\qquad\text{hence}\qquad
\HJ(t,r)\ \ge\ \max_{|S|=t}\Bigl\lceil\tfrac{G_r(S)-1}{D_S}\Bigr\rceil .
\]
\end{theorem}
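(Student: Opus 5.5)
We will prove the bound with a single one-weight coloring built from an extremal coloring of $G_r(S)-1$ consecutive integers. By Remark~\ref{rem:affine} (an affine change of $S$ changes neither $G_r(S)$ nor $D$), we normalize $\min S=0$, so $S=\{s_1,\dots,s_t\}\subseteq\{0,\dots,D\}$ with $0,D\in S$. We take the weight $\omega=(s_1,\dots,s_t)$, so $S_\omega=S$, and set $N=G_r(S)-1$. By the definition of $G_r(S)$ there is a coloring $\chi_0\colon\{0,1,\dots,N-1\}\to[r]$ with no monochromatic homothet $b+kS$ ($k\ge1$) contained in this interval. We extend $\chi_0$ arbitrarily to a palette $\chi\colon\Z\to[r]$.

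Next we choose $n$ so that every level of $[t]^n$ lands in the window. For $w\in[t]^n$ the level $\langle\omega,\type(w)\rangle=\sum_i s_{w_i}$ lies in $[0,nD]$, because each letter contributes a value in $[0,D]$. Hence if $nD\le N-1$, that is $n\le (N-1)/D=(G_r(S)-2)/D$, every level is a point of $\{0,\dots,N-1\}$. We take $n=\lfloor (G_r(S)-2)/D\rfloor$, which equals $\lceil (G_r(S)-1)/D\rceil-1$ by the identity $\lfloor (m-1)/D\rfloor=\lceil m/D\rceil-1$ for integers $m$ and $D\ge1$.

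Now we apply Lemma~\ref{lem:pattern} at $K=n$. A monochromatic line of $[t]^n$ under $c_{\omega,\chi}$ would give a monochromatic set $b+kS$ with $k\ge1$, and each point of this set is the level of some word of the line. All such levels lie in $\{0,\dots,N-1\}$, so $b+kS$ is a homothet contained in the interval and monochromatic under $\chi_0$. This contradicts the choice of $\chi_0$. Therefore $c_{\omega,\chi}$ is line-free on $[t]^n$, which gives $\HJ(t,r)\ge n+1=\lceil (G_r(S)-1)/D\rceil$. The maximum form then follows by taking the best $S$ with $|S|=t$.

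The only delicate step is the ceiling bookkeeping, which is the identity used in choosing $n$. We also handle the degenerate case $G_r(S)=1$ separately: it cannot occur for $t\ge2$ and $r\ge1$, and the bound is trivial when $G_r(S)\le 2$. No other step is an obstacle.
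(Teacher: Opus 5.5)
Your proof is correct and is essentially the paper's argument: normalize $\min S=0$, take a weight with value set $S$, and compose a homothet-free coloring of $G_r(S)-1$ consecutive integers with the level map, whose image $[0,Dn]$ fits in that window exactly when $n\le\lfloor(G_r(S)-2)/D\rfloor$; line-freeness then follows from Lemma~\ref{lem:pattern}. One small correction: the normalization needs only a translation, and only translations leave $G_r(S)$ and $D$ unchanged (scalings change both, by Lemma~\ref{lem:affine-G}), so your parenthetical should say ``translation'' rather than ``affine change.''
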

 
\begin{proof}
Normalize $\min S=0$ and take $\omega$ with value set $S$. The functional
$\pi_\omega(w)=\langle\omega,\type(w)\rangle$ maps $[t]^n$ into the interval
$I_n=[0,Dn]$ of $Dn+1$ integers and, by Theorem~\ref{thm:gallai-form}, every
line onto a homothet of $S$ inside $I_n$. If $Dn+1\le G_r(S)-1$, some
$r$-coloring $\xi$ of $I_n$ contains no monochromatic homothet of $S$ at all;
then $c=\xi\circ\pi_\omega$ is line-free and $\HJ(t,r)>n$. The largest such
$n$ is $\lfloor(G_r(S)-2)/D\rfloor$, and
$\lfloor(G_r(S)-2)/D\rfloor+1=\lceil(G_r(S)-1)/D\rceil$.
\end{proof}
 
\begin{corollary}[Arithmetic weight $=$ van der Waerden]\label{cor:arith}
$G_r(\{0,1,\dots,t-1\})=W(t,r)$, and for the arithmetic weight
$\omega=(0,1,\dots,t-1)$ the bound of Theorem~\ref{thm:gallai-bound} is
exactly~\eqref{eq:vdw-shadow}; moreover here the closed
form is exact: $n_\omega(t,r)=\lfloor(W(t,r)-2)/(t-1)\rfloor$, i.e.\ the
bound $\lceil(W(t,r)-1)/(t-1)\rceil$ equals $n_\omega+1$.
\end{corollary}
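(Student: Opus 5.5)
The corollary has three parts, and I would prove them in order: first the identity $G_r(\{0,1,\dots,t-1\})=W(t,r)$, then the substitution into Theorem~\ref{thm:gallai-bound}, and finally the exactness of the reach.

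\textbf{The identity.} With $S=\{0,1,\dots,t-1\}$, a homothet $b+kS$ with $k\ge1$ is exactly a $t$-term arithmetic progression $\{b,b+k,\dots,b+(t-1)k\}$ with positive gap. Such a homothet lies inside $N$ consecutive integers iff the corresponding progression does. So the condition defining $G_r(S)$ coincides verbatim with the one defining $W(t,r)$, translated to any window of length $N$, and the two least values agree.

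\textbf{Substitution.} Here $D=t-1$. Inserting $G_r(S)=W(t,r)$ into Theorem~\ref{thm:gallai-bound} gives $\lceil(W(t,r)-1)/(t-1)\rceil$, which is~\eqref{eq:vdw-shadow}.

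\textbf{Exactness, lower side.} The proof of Theorem~\ref{thm:gallai-bound} already shows that $c_{\omega,\chi}$ is line-free on $[t]^n$ whenever $(t-1)n+1\le W(t,r)-1$. Hence $n_\omega\ge\lfloor(W(t,r)-2)/(t-1)\rfloor$.

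\textbf{Exactness, upper side.} Now suppose $(t-1)n+1\ge W(t,r)$, and let $\chi$ be any palette. By Lemma~\ref{lem:pattern} at $K=n$, it suffices to show that every progression $b+kS$ contained in $I_n=[0,(t-1)n]$ with $k\ge1$ is realized, that is, $1\le k\le n$ and $b\in\langle\omega,T^{(t)}_{n-k}\rangle$.
\begin{itemize}
\item Containment gives $b\ge0$ and $b+(t-1)k\le(t-1)n$. Thus $k\le n-b/(t-1)\le n$, and $b\le(t-1)(n-k)$.
\item For the arithmetic weight, $\langle\omega,T^{(t)}_m\rangle=\{0,1,\dots,(t-1)m\}$: put letters of weight $t-1$ and one intermediate letter, padding with weight $0$.
\item So with $m=n-k$, the base $b\in[0,(t-1)(n-k)]$ is realized.
\end{itemize}
Since $|I_n|\ge W(t,r)$, the restriction of $\chi$ to $I_n$ has a monochromatic $t$-AP inside $I_n$. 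That AP is a realized homothet, so its line is monochromatic and $c_{\omega,\chi}$ is not line-free.

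\textbf{Conclusion.} Combining the two sides, $n_\omega=\lfloor(W(t,r)-2)/(t-1)\rfloor$. The final equality $\lfloor(W-2)/(t-1)\rfloor+1=\lceil(W-1)/(t-1)\rceil$ is the same arithmetic identity used at the end of the proof of Theorem~\ref{thm:gallai-bound}.

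\textbf{Main obstacle.} The only substantive step is the surjectivity claim that the levels of $T^{(t)}_{m}$ fill the whole interval $[0,(t-1)m]$, with no gaps. This is what makes the interval model exact for the arithmetic weight, whereas for general $S$ the monoid restriction of Theorem~\ref{thm:gallai-form} could leave holes.
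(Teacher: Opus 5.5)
Your proposal is correct and follows the paper's own proof. The paper likewise argues that homothets of $\{0,\dots,t-1\}$ are exactly $t$-term progressions, and that the arithmetic weight realizes every base in $[0,(t-1)(n-k)]$; hence the realized homothets are exactly the progressions in $[0,(t-1)n]$, and the reach is governed by $(t-1)n+1\le W(t,r)-1$. You simply spell out the containment computation and the level-filling argument that the paper compresses into a single parenthesis.
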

 
\begin{proof}
Homothets of $\{0,\dots,t-1\}$ are exactly $t$-term progressions, giving the
first claim and the bound. For the reach: the bases realized in the cube are
all of $[0,(t-1)(n-k)]$ (weights $0,\dots,t-1$ realize every intermediate
value), so the homothets realized are exactly the $t$-term progressions in
$[0,(t-1)n]$, and a progression-free $r$-coloring of that interval exists iff
$(t-1)n+1\le W(t,r)-1$.
\end{proof}
 
\begin{lemma}[Affine covariance]\label{lem:affine-G}
For $c\ge1$ and $d\in\Z$, $G_r(cS+d)=c\,(G_r(S)-1)+1$, and
$G_r(-S)=G_r(S)$. Hence $\lceil(G_r(S)-1)/D_S\rceil$ is invariant under
$S\mapsto\pm cS+d$, and in the maximum of Theorem~\ref{thm:gallai-bound}
one may assume $\min S=0$ and $\gcd S=1$. (For $|S|=3$ the scaling
identity is Theorem~1 of Brown--Landman--Mishna~\cite{BrownLandmanMishna1997}.)
\end{lemma}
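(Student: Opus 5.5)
The plan is to exhibit, for each of the two maps $x\mapsto cx+d$ and $x\mapsto -x$, a bijection between colorings of windows of consecutive integers that carries homothet-free colorings for one set to homothet-free colorings for the other. This yields the two identities for $G_r$; the invariance of the ratio then follows by arithmetic.

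\textbf{Translation and reflection.} Translation is immediate: $b+k(S+d)=(b+d)+kS$, so the homothets of $S+d$ and of $S$ inside a window are the same subsets. Hence $G_r(S+d)=G_r(S)$, and we may assume $\min S=0$. For reflection, $b+k(-S)=-( -b+kS)$. The map $x\mapsto -x$ reverses a window of $N$ consecutive integers onto another window of $N$ consecutive integers and exchanges homothets of $S$ with homothets of $-S$. Pulling colorings back along this map gives $G_r(-S)=G_r(S)$.

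\textbf{Scaling.} For $S\mapsto cS$ I prove the two inequalities separately.

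For the lower bound, let $N=G_r(S)-1$ and let $\xi$ be an $r$-coloring of $[0,N-1]$ with no monochromatic homothet of $S$. Consider the window $J=[0,c(N-1)]$, which has $c(N-1)+1$ integers. Color $x\in J$ by $\xi(\lfloor x/c\rfloor)$. A homothet $b+kcS$ has all its elements in the single residue class of $b$ mod $c$. Writing $b=cq+s$ with $0\le s<c$ gives $\lfloor (b+kcx)/c\rfloor=q+kx$. So a monochromatic homothet of $cS$ in $J$ yields the monochromatic homothet $q+kS$ in $[0,N-1]$, a contradiction. Therefore $G_r(cS)\ge c(N-1)+2=c(G_r(S)-1)+1+ (1-c)+c$.

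I must recheck the target here: the target is $c(G_r(S)-1)+1=cN+1$, and $J$ above has only $c(N-1)+1$ cells, which is short by $c$. To fix this I enlarge the window to $[0,cN-1]$, which has $cN$ cells, and use the same floor coloring. Every element maps to $\lfloor x/c\rfloor\in[0,N-1]$, so the same argument applies. This gives $G_r(cS)\ge cN+1$.

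For the upper bound, take any $r$-coloring of $cN+1$ consecutive integers, say $[0,cN]$. Restrict it to each residue class mod $c$. Dilating by $c$, the class $s$ becomes a coloring of $\{0,\dots,M_s\}$ via $y\mapsto$ color of $s+cy$, where $M_s+1$ is the class size. At least one class, namely $s=0$, has $N+1\ge G_r(S)$ elements, so it contains a monochromatic homothet $q+kS$. Its image $cq+kcS$ is then a monochromatic homothet of $cS$. Hence $G_r(cS)\le cN+1$.

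\textbf{Consequences and main difficulty.} Since $D_{\pm cS+d}=cD_S$, the identities give
\[
(G_r(\pm cS+d)-1)/D_{\pm cS+d}=c(G_r(S)-1)/(cD_S),
\]
so the ratio, and hence its ceiling, is invariant. Dividing by $\gcd S$ after normalizing $\min S=0$ gives the final reduction. I expect the main obstacle to be the window bookkeeping in the scaling step, that is, getting exactly $cN$ versus $cN+1$ cells. The floor-coloring lower bound must use the full window $[0,cN-1]$, and the upper bound must use a single residue class of size $N+1$. No other step should give trouble.
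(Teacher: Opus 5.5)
Your argument is correct and is essentially the paper's. The paper splits a window into its $c$ residue classes, each order-isomorphic to a shorter window on which homothets of $cS$ become homothets of $S$, and colors the classes independently. Your floor coloring (every class colored by the same avoiding $\xi$) and your use of the class of $0$ in $[0,cN]$ are exactly the two directions of that equivalence at the threshold.

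Two cosmetic fixes are needed. Translation gives $b+k(S+d)=(b+kd)+kS$, not $(b+d)+kS$; the conclusion that the two homothet families coincide is unaffected. The retracted first attempt on the window $[0,c(N-1)]$, whose displayed equality is false, should simply be deleted.
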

 
\begin{proof}
Translation and reflection act on colorings. For the scaling, split a
window of $N$ consecutive integers into its $c$ residue classes: every
homothet of $cS$ lies in a single class, each class is order-isomorphic to
a window of $\lceil N/c\rceil$ or $\lfloor N/c\rfloor$ consecutive
integers on which the homothets of $cS$ correspond to the homothets of
$S$, and the classes are colored independently. Avoidance is therefore
possible iff $\lceil N/c\rceil\le G_r(S)-1$, i.e.\ iff
$N\le c\,(G_r(S)-1)$.
\end{proof}
 
\begin{table}[ht]
\centering
\caption{The Gallai-number bound from a single input. The values $42$, $57$,
$67$, $80$ and the bound $G_3(\{0,2,5\})\ge77$ are new. Avoidance colorings
at $G_r-1$ (at $76$ for $\{0,2,5\}$) were re-verified by direct enumeration;
forcing at $G_r$ was established by SAT --- two independent solvers for
$27$, $35$, $67$, $80$, three for $42$ and $57$. The rows $\{0,2,3,5\}$ and
$\{0,1,5,6\}$ give the $(4,2)$ record of Theorem~\ref{thm:hj42-14}; the
rows at three colors give Theorem~\ref{thm:hj33-16}.}
\label{tab:gallai}
\begin{tabular}{@{}lllll@{}}
\toprule
$(t,r)$ & $S$ & $D$ & $G_r(S)$ & $\lceil(G_r(S)-1)/D\rceil$\\
\midrule
$(2,r)$ & $\{0,1\}$     & $1$ & $r+1$         & $r$ \;(tight)\\
$(3,2)$ & $\{0,1,2\}$   & $2$ & $9=W(3,2)$    & $4$ \;(tight)\\
$(3,3)$ & $\{0,1,2\}$   & $2$ & $27=W(3,3)$   & $13$\\
$(3,3)$ & $\{0,1,3\}$   & $3$ & $42$          & $14$\\
$(3,3)$ & $\{0,1,4\}$   & $4$ & $57$          & $14$\\
$(3,3)$ & $\{0,2,5\}$   & $5$ & $\ge77$       & $\ge\mathbf{16}$\\
$(4,2)$ & $\{0,1,2,3\}$ & $3$ & $35=W(4,2)$   & $12$\\
$(4,2)$ & $\{0,2,3,5\}$ & $5$ & $67$          & $\mathbf{14}$\\
$(4,2)$ & $\{0,1,5,6\}$ & $6$ & $80$          & $\mathbf{14}$\\
\bottomrule
\end{tabular}
\end{table}
 
\begin{remark}[The closed form against the true reach]\label{rem:closed-vs-reach}
Theorem~\ref{thm:gallai-bound} counts \emph{all} homothets in the interval,
including those whose base lies outside the monoid generated by $S$; the cube
realizes only the monoid bases, so the dimension bound $n_\omega+1$ is at
least the closed-form value, with equality when no relevant base is
missing. The
two coincide for arithmetic $S$ (Corollary~\ref{cor:arith}) and for the
$(4,2)$ record below, but need not coincide: for the radix weight
$\omega=(0,1,29)$ the exhibited reach is $n_\omega\ge21$
(Theorem~\ref{thm:hj33-22}) while $G_3(\{0,1,29\})$, hence its closed form,
has not been certified; where the two diverge, the gap is
precisely the carry homothets that never occur in the cube for $n<29$
(Theorem~\ref{thm:radix}). The closed form is a cheap certificate; the
symmetric reach is the true power. Read backwards,
Theorem~\ref{thm:gallai-bound} gives $G_r(S)\le D\cdot\HJ(t,r)+1$, sharp at
$t=2$ where $G_r(\{0,D\})=rD+1$ (Lemma~\ref{lem:affine-G}), and the optimum
$\max_{|S|=t}\lceil(G_r(S)-1)/D_S\rceil$ becomes an extremal problem on
Gallai numbers per unit diameter (Question~\ref{q:weights}). By
Lemma~\ref{lem:affine-G} the ratio depends only on the affine class of
$S$: e.g.\ $G_2(\{0,2,4,6\})=2\cdot34+1=69$ exceeds $67$, yet
$\lceil68/6\rceil=12$ merely reproduces the arithmetic row of
Table~\ref{tab:gallai}.
\end{remark}
 
\subsection{New Gallai numbers at three colors, and $\HJ(3,3)\ge16$}\label{sec:landscape}
 
At $(2,r)$ the closed form is tight for \emph{every} weight: all two-point
sets are affinely $\{0,1\}$ (Lemma~\ref{lem:affine-G}) and
$G_r(\{0,1\})=r+1$ by pigeonhole, so the ratio is $r=\HJ(2,r)$ throughout.
The first nontrivial pair behaves the same way.
 
\begin{proposition}[{Flatness at $(3,2)$; after
Brown--Landman--Mishna~\cite{BrownLandmanMishna1997} and
Kim--Rho~\cite{KimRho2012}}]\label{prop:flat32}
Let $S=\{0,s,s+u\}$ with $\gcd(s,u)=1$ and diameter $D=s+u$. Then
\[
G_2(S)\;=\;
\begin{cases}
4D, & \{s,u\}=\{1,4m\}\ \text{for some } m\ge1,\\[2pt]
4D+1, & \text{otherwise},
\end{cases}
\]
and consequently, for \emph{every} three-point $S$,
\[
\Bigl\lceil\tfrac{G_2(S)-1}{D_S}\Bigr\rceil\;=\;4\;=\;\HJ(3,2):
\]
at $(3,2)$ the closed form of Theorem~\ref{thm:gallai-bound} is
weight-independent and tight.
\end{proposition}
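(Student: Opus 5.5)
The formula for $G_2(S)$ is attributed to Brown--Landman--Mishna and Kim--Rho, so I would not reprove it. I would import it, check that the normalizations agree, and then derive the ratio statement by a short computation.

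\textbf{Step 1: matching the definitions.} By Lemma~\ref{lem:affine-G}, every three-point set is affinely equivalent to some $S=\{0,s,s+u\}$ with $s,u\ge1$ and $\gcd(s,u)=1$. The map $S\mapsto -S+D$ swaps $s$ and $u$, which is why the case condition is symmetric in $\{s,u\}$. Our $G_r(S)$ is the least $N$ such that every $r$-coloring of $N$ consecutive integers contains a monochromatic homothet $b+kS$ with $k\ge1$. This is the same as the quantity studied in those references, namely the least $N$ such that every $2$-coloring of $[1,N]$ contains a monochromatic triple $x<y<z$ with $(y-x):(z-y)=s:u$. I expect this identification to be the main point needing care. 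Some sources count such triples with different conventions, for instance ordered $(a,b)$ with $a\le b$, or allowing non-primitive scalings. So I would check the formula against the entries already in the paper: $G_2(\{0,1,2\})=9=W(3,2)$ is the case $s=u=1$, which falls under ``otherwise'' and gives $4\cdot2+1$. I would also verify small cases by direct enumeration, e.g.\ $\{0,1,5\}$ should give $20$ and $\{0,1,3\}$ should give $13$.

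\textbf{Step 2: the ratio computation.} Once the formula is in hand, the rest is arithmetic. If $G_2(S)=4D+1$, then $\lceil(G_2-1)/D\rceil=\lceil 4D/D\rceil=4$. If $G_2(S)=4D$, then $\lceil(4D-1)/D\rceil=4$, because $D\ge 2$ makes $3<4-1/D<4$. In both cases the bound is $4$, which equals $\HJ(3,2)$ by Hindman--Tressler. For a general three-point $S$, Lemma~\ref{lem:affine-G} shows that $\lceil(G_r(S)-1)/D_S\rceil$ is invariant under $S\mapsto\pm cS+d$. Hence the primitive normalized case covers every $S$, and weight-independence follows.

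\textbf{A self-contained fallback.} If we wanted to avoid relying fully on the citations, the ratio claim alone has an independent argument. The upper bound $\lceil (G_2(S)-1)/D\rceil\le\HJ(3,2)=4$ is exactly Theorem~\ref{thm:gallai-bound} applied with $\HJ(3,2)=4$. The matching lower bound needs $G_2(S)\ge 3D+2$. This could be shown by an explicit coloring of $3D+1$ consecutive integers with no monochromatic homothet. My first attempt would be the block coloring of four consecutive blocks of lengths roughly $D$ colored $A,B,B,A$ or $A,B,A,B$, with the exact block lengths adjusted to $s$ and $u$.

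That construction is where I expect real work: the blocks must be tuned so that no homothet, at any scale $k$, lands inside one color. For this reason I would rely primarily on the cited exact formula for $G_2(S)$ and keep the construction only as a supplementary check.
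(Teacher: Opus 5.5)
Your proposal is correct and follows essentially the same route as the paper. Both identify $G_2(S)$ with the Brown--Landman--Mishna function $f(s,u)$ and import the Kim--Rho resolution; there, coprimality reduces the exceptional family $\{t,4mt\}$ to $t=1$, giving $4(4m+1)=4D$. Both then reduce general $S$ by Lemma~\ref{lem:affine-G}, compute the ceiling, invoke $\HJ(3,2)=4$ for tightness, and cross-check small cases by direct computation. Keeping the block-coloring fallback only as a side check is the right call, since a naive alternating-block coloring already fails: for $S=\{0,1,3\}$ it makes the homothet $\{0,2,6\}$ monochromatic.
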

 
\begin{proof}
Since a copy $b+k\{1,1+s,1+s+u\}$ equals $(b+k)+kS$, homothety-avoidance
on an interval is the same for the two sets, so $G_2(S)$ is the number
$f(s,u)$ of Brown--Landman--Mishna: the least $N$ such that every
$2$-coloring of $[1,N]$ contains a monochromatic homothetic copy of
$\{1,1+s,1+s+u\}$. They proved $f(s,u)\le4(s+u)+1$, with equality
whenever $s/g\not\equiv0$ and $u/g\not\equiv0\pmod4$ ($g=\gcd(s,u)$) and
in further cases, leaving for the family $\{t,4mt\}$ two candidate
values~\cite{BrownLandmanMishna1997}; Kim and Rho settled it:
$f(4mt,t)=f(t,4mt)=4(4mt+t)-t+1$, and $f(s,u)=4(s+u)+1$ for every other
pair~\cite[Theorem~12]{KimRho2012}. For primitive $S$ the exceptional
pairs force $t=1$, since $\gcd(t,4mt)=t$, giving the displayed
dichotomy; general $S$ reduces to this by Lemma~\ref{lem:affine-G}. For
the ratio, $(G_2(S)-1)/D$ equals $4$ in the generic case and $4-1/D$ in
the exceptional one, so its ceiling is $4$ in both; and
$\HJ(3,2)=4$~\cite{firstnontrivialHJ_is_4} makes the bound tight. As an
independent check, all fourteen primitive classes with $D\le9$ were
recomputed by SAT for this note, the avoidance certificates re-verified
by direct enumeration from the definitions; every value matches the
formula, the two exceptional classes in range being
$G_2(\{0,1,5\})=20$ and $G_2(\{0,1,9\})=36$ ($m=1,2$).
\end{proof}
 
Combined with the $t=2$ row of Table~\ref{tab:gallai}, where
$G_r(\{0,1\})=r+1$ gives the ratio $r=\HJ(2,r)$ for every weight, the
optimized closed form is tight --- for \emph{every} weight, not only the
arithmetic one --- at every pair $(t,r)$ where $\HJ$ is known exactly.
This strengthens conclusion \emph{(i)} of Remark~\ref{rem:records} from
the van der Waerden shadow to the whole one-weight class, and makes
$(3,3)$ the first pair where the optimum
$\max_S\lceil(G_r(S)-1)/D_S\rceil$ is genuinely open.
 
\begin{theorem}[$\HJ(3,3)\ge16$ in closed form; computer-assisted]\label{thm:hj33-16}
$G_3(\{0,1,3\})=42$ and $G_3(\{0,1,4\})=57$ (avoidance certificates
re-verified by direct enumeration; forcing confirmed by three independent
solvers), and
\[
G_3(\{0,2,5\})\ \ge\ 77 .
\]
The latter is witnessed by the explicit $3$-coloring $\xi$ of $\{0,\dots,75\}$
\begin{center}\ttfamily
10201110020122122201020111002012212210\\
10201110020122122001020111002012212200
\end{center}
\textup{(}listing $\xi(0),\dots,\xi(75)$\textup{)}, re-verified by direct
enumeration to contain no monochromatic homothet of $\{0,2,5\}$. Since
$\lceil(G_3(\{0,2,5\})-1)/5\rceil\ge\lceil76/5\rceil=16$,
Theorem~\ref{thm:gallai-bound} gives
\[
\HJ(3,3)\ \ge\ 16 .
\]
\end{theorem}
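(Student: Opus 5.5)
The statement has three parts: two exact values, an explicit lower bound, and the derived bound on $\HJ(3,3)$. The last part is a one-line deduction from Theorem~\ref{thm:gallai-bound}. So the work is in certifying the Gallai numbers and the witness $\xi$, and I would organize the proof around the split between \emph{avoidance} (finite, checkable) and \emph{forcing} (unsatisfiability).

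\textbf{Step 1: the witness for $\{0,2,5\}$.} Read off $\xi(0),\dots,\xi(75)$ from the displayed string. Enumerate every homothet $b+k\{0,2,5\}$ contained in $\{0,\dots,75\}$, that is, all $k\ge1$ and all $0\le b\le 75-5k$. For each, check that the three values $\xi(b),\xi(b+2k),\xi(b+5k)$ are not all equal. There are only $\sum_{k=1}^{15}(76-5k)$ triples, a few hundred, so this is a direct enumeration independent of any solver. It gives an avoiding $3$-coloring of $76$ consecutive integers, hence $G_3(\{0,2,5\})\ge77$.

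\textbf{Step 2: the exact values $42$ and $57$.} For the lower bounds, exhibit avoiding $3$-colorings of windows of length $41$ and $56$ and verify them the same way. For the upper bounds, encode the statement ``no $3$-coloring of $\{0,\dots,N-1\}$ avoids $b+kS$'' for $N=42$ and $N=57$ as a CNF formula:
\begin{itemize}
\item one variable for each pair (point, color);
\item clauses saying each point gets at least one color, and optionally at most one;
\item for each homothet and each color, a clause forbidding all three points of that homothet from taking that color.
\end{itemize}
Add symmetry breaking on the color permutation, for instance fixing the color of position $0$. Also exploit the reflection $x\mapsto N-1-x$, which maps homothets of $S$ to homothets of $-S$, not of $S$ itself, so it can only be used with care. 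Then run independent SAT solvers and confirm UNSAT, recording DRAT proofs where possible.

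The main obstacle is this forcing step. Unlike avoidance, it admits no short human-checkable certificate. I would first try plain CDCL with color-symmetry breaking. If $N=57$ is slow, I would add cube-and-conquer splitting on the colors of the middle positions. Trust would rest on agreement of several solvers plus proof logs, exactly as the Verification paragraph of the introduction allows for unsatisfiability claims.

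\textbf{Step 3: the bound on $\HJ(3,3)$.} Apply Theorem~\ref{thm:gallai-bound} with $t=3$, $r=3$, $S=\{0,2,5\}$, $D=5$. Concretely, take $\omega=(0,2,5)$ and color $w$ by $\xi(\langle\omega,\type(w)\rangle)$ on $[3]^{15}$. The image of this map is contained in $[0,75]$, and by Lemma~\ref{lem:pattern} every line maps to a homothet of $S$ inside that window. Since $\xi$ has no monochromatic homothet of $S$ there, the coloring is line-free, giving $\HJ(3,3)\ge16=\lceil 76/5\rceil$. For comparison, the values $42$ and $57$ yield only $14$ in the same way, which is why they are recorded as values rather than used for the bound.
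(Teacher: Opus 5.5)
Your proposal is correct and follows the paper's own argument: the avoidance certificates, including the displayed length-$76$ witness for $\{0,2,5\}$ over all $540$ homothets with $1\le k\le 15$, are checked by direct enumeration, while forcing at $42$ and $57$ rests on SAT unsatisfiability. The bound $\HJ(3,3)\ge16$ then follows from Theorem~\ref{thm:gallai-bound} with $\omega=(0,2,5)$ on $[3]^{15}$, and your observation that $42$ and $57$ only give $\lceil 41/3\rceil=\lceil 56/4\rceil=14$ matches Table~\ref{tab:gallai}.
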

 
\begin{remark}[Reading the new values]\label{rem:landscape}
The closed form $16$ exceeds both the arithmetic row ($13$, the van der
Waerden shadow) and the previous published record
$\HJ(3,3)\ge14$~\cite{nayda}: a one-line, hand-checkable certificate already
beats the prior state of the art, though the radix reach of
Theorem~\ref{thm:hj33-22} remains far ahead. At $r=2$ the ratio is flat at
$4$ (Proposition~\ref{prop:flat32}); the computed values at $r=3$
($13,14,14,\ge16$ at $D=2,3,4,5$) leave its behavior in the diameter open
(Question~\ref{q:weights}\emph{(i)}). Since the closed form never exceeds
$n_\omega+1\le\HJ_{\mathrm{sym}}$ (Remark~\ref{rem:closed-vs-reach}), a
ratio reaching $22$ would tie --- never beat --- the symmetric frontier, but
would replace the $253$-cell table of Theorem~\ref{thm:hj33-22} by a
one-dimensional certificate.
\end{remark}
 
\section{The Records}\label{sec:records}
 
\subsection{$\HJ(3,3)\ge22$ by a single weight}
 
The previous record was $\HJ(3,3)\ge14$~\cite{nayda}; the van der Waerden
shadow~\eqref{eq:vdw-shadow} gives $13$. By Theorem~\ref{thm:radix} a single
weight reaches the full symmetric value, and $29>21$ makes $(0,1,29)$ the
clean choice.
 
\begin{theorem}[One-weight; computer-assisted]\label{thm:hj33-22}
The weight $\omega=(0,1,29)$ admits a palette $\chi\colon\Z\to\{0,1,2\}$ for
which $c_{\omega,\chi}$ has no monochromatic line on $[3]^{21}$. Hence
\[
\HJ(3,3)\ \ge\ 22 .
\]
\end{theorem}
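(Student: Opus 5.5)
The plan is to reduce the claim, via Theorem~\ref{thm:radix} and Lemma~\ref{lem:reduction}, to a finite coloring problem on the simplex $T_{21}^{(3)}$, and then to supply and check an explicit certificate. Since $p=29>21=n$, the radix weight $\omega^\ast=(0,1,29)$ is injective on $T_{21}^{(3)}$: a type $(v_1,v_2,v_3)$ is sent to the base-$29$ integer $v_2+29v_3$. By Theorem~\ref{thm:radix}, a palette $\chi$ with the required property exists if and only if some $3$-coloring $\bar c$ of the $\binom{23}{2}=253$ cells of $T_{21}^{(3)}$ has no monochromatic corner tuple $C_{k,v}=\{v+ke_1,v+ke_2,v+ke_3\}$ with $1\le k\le 21$ and $v\in T_{21-k}^{(3)}$. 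Given such a $\bar c$, we set $\chi(v_2+29v_3):=\bar c(v)$ and extend $\chi$ arbitrarily off the image. Then Lemma~\ref{lem:pattern} shows that every line of $[3]^{21}$ is carried to a carry-free triple $\{b,\,b+k,\,b+29k\}$ that is the image of exactly one corner tuple, so $c_{\omega,\chi}$ is line-free.

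The remaining work is to produce $\bar c$ and certify it. There are $\binom{23}{3}=1771$ corner tuples, each giving three constraints, one per color. I would encode the problem as SAT, with $3\cdot253$ variables, an at-least-one-color clause per cell, and for each tuple and each color a clause forbidding all three cells from taking that color. To cut the search space I would break color symmetry and could try to impose a reflection symmetry of the simplex, namely swapping $v_1$ and $v_2$, which preserves the family of corner tuples. I would also seed the solver from structured guesses: sum-type colorings extended by periodic palettes, or the $n=20$ solution grown by one layer. Once a satisfying assignment is found, verification is a dependency-free loop over all $1771$ tuples, and that check is the whole proof. Recording the $253$-cell table as in~\cite{thesis}, the bound follows: a line-free $3$-coloring of $[3]^{21}$ gives $\HJ(3,3)>21$, so $\HJ(3,3)\ge22$.

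The main obstacle is existence, not verification. There is no a priori reason $n=21$ should be satisfiable, since the closed-form Gallai bound only reaches $16$. The SAT instance near the threshold may be hard, and an unstructured solver may stall. If it does, my first fallback is incremental extension: fix a solution on $T_{20}^{(3)}$, embedded as the cells with $v_1\ge1$ shifted, and solve only for the new boundary layer $v_1=0$ with local repair. The second fallback is to impose the $v_1\leftrightarrow v_2$ symmetry to shrink the instance before searching.
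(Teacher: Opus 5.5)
Your proposal is correct and follows the paper's route. Since $29>21$, the radix functional $v\mapsto v_2+29v_3$ is injective on the $253$ cells of $T^{(3)}_{21}$ and sends the $1771$ corner triples to carry-free homothets $\{b,b+k,b+29k\}$, so the theorem reduces to exhibiting a corner-free $3$-coloring of the simplex, checked by a dependency-free pass over all triples. The one piece you leave open is that certificate, and the paper does not display it either but takes it from~\cite{thesis}; by Remark~\ref{rem:hj33} solvers settle $n=21$ in seconds, so your incremental-extension and symmetry-imposing fallbacks are unnecessary.
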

 
\begin{proof}
Since $29>21$, Theorem~\ref{thm:radix} makes
$\langle\omega,\cdot\rangle\colon(v_1,v_2,v_3)\mapsto v_2+29v_3$ injective on
$T^{(3)}_{21}$, carrying each corner triple $C_{k,v}$ onto the carry-free
planar corner $\{(v_2,v_3),(v_2{+}k,v_3),(v_2,v_3{+}k)\}$. By
Lemma~\ref{lem:pattern}, $c_{\omega,\chi}$ is line-free iff
$\bar c(v):=\chi(v_2+29v_3)$ leaves all such corners non-monochromatic. A
$3$-coloring of the $253$ cells of $T^{(3)}_{21}$ avoiding all
$1771=\binom{23}{3}$ corner triples ($1\le k\le21$) exists: the certificate,
recorded in~\cite{thesis} as the palette $\chi$ on the $253$ realized levels
$\{v_2+29v_3:v_2+v_3\le21\}\subset\{0,\dots,609\}$, is checked by a
dependency-free verifier that re-derives every corner triple from the
definition and reports zero monochromatic triples, cross-checked by two
independent solvers.
\end{proof}
 
\begin{remark}\label{rem:hj33}
The instance ($253$ cells, $1771$ triples) is small yet sits at the
satisfiability phase transition: solvers settle $n=21$ in seconds and stall
at $n=22$, where no refutation has been found, so the bound may still
improve (Section~\ref{sec:open}).
\end{remark}
 
\begin{proposition}[A compact periodic witness; computer-verified]\label{prop:hj33-15}
For the weight $\omega=(0,1,4)$ and the $49$-periodic palette
\begin{center}\ttfamily 0112100212222001110021101201102002110220010200212 \end{center}
\textup{(}listing $\psi(0),\dots,\psi(48)$\textup{)}, the coloring
$c(w)=\psi\bigl(\langle\omega,\type(w)\rangle\bmod49\bigr)$ has no
monochromatic line on $[3]^{14}$, verified by direct enumeration of all
corner triples. Hence $\HJ(3,3)\ge15$ from a $49$-cell certificate, against
the $120$ simplex cells of $T^{(3)}_{14}$.
\end{proposition}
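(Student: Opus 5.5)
The plan is to reduce the claim to a finite, one-dimensional check of the palette $\psi$. The reduction comes from Lemma~\ref{lem:pattern} and Lemma~\ref{lem:reduction}. The coloring is $c=c_{\omega,\chi}$ with $\omega=(0,1,4)$ and $\chi(x)=\psi(x\bmod 49)$, so it is a one-weight coloring. By Lemma~\ref{lem:pattern} at $K=n=14$, $c$ is line-free on $[3]^{14}$ iff $\chi$ has no monochromatic homothet $b+k\{0,1,4\}$ with $1\le k\le 14$ and $b\in\langle\omega,T^{(3)}_{14-k}\rangle$. Since $\chi$ is $49$-periodic, each such homothet is monochromatic iff the residue triple $\{b,b+k,b+4k\}\bmod 49$ is monochromatic under $\psi$.

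The key steps, in order, are as follows.

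\emph{(1) Enumerate the bases.} For each $k\in\{1,\dots,14\}$, list the bases $b=v_2+4v_3$ with $v_2+v_3\le 14-k$. Equivalently, enumerate the corner triples $C_{k,v}$ directly: there are $\binom{16}{3}=560$ of them by Remark~\ref{rem:compression}.

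\emph{(2) Test each triple.} For each triple, compute $\psi$ at the three residues $b$, $b+k$, $b+4k \pmod{49}$ and confirm that they are not all equal.

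\emph{(3) Shrink the search by periodicity.} This is optional. Only the pairs $(b\bmod 49,k)$ actually occurring matter, and $b\le 4\cdot 13=52$, so wraparound affects only a few bases. One could instead check all $49\cdot 14$ residue pairs, but that is a stronger statement and need not hold, since the palette only has to work on realized bases. I would therefore restrict to the realized ones.

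\emph{(4) Conclude.} Having found no monochromatic triple, $c$ is line-free on $[3]^{14}$, so $\HJ(3,3)>14$, i.e.\ $\HJ(3,3)\ge15$. The count of $120=\binom{16}{2}$ cells of $T^{(3)}_{14}$ follows from the simplex formula in Section~\ref{sec:type}.

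The only real obstacle is that step (2) is a finite computation with no conceptual shortcut. I would carry it out by a short direct enumeration, independent of any solver, as the paper's verification policy demands. By hand one would organize it by $k$, reading off the residues of $\{b,b+k,b+4k\}$ against the listed string. Note that injectivity of $\langle\omega,\cdot\rangle$ is \emph{not} needed: $4\le 14$, so Theorem~\ref{thm:radix} does not apply, but Lemma~\ref{lem:pattern} holds for any weight, and collisions only make the avoidance harder, never invalid.
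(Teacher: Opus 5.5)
Your proposal is correct and is essentially the paper's own argument. The coloring is one-weight, so by Lemma~\ref{lem:pattern} (equivalently Lemma~\ref{lem:reduction} at $K=n=14$) line-freeness reduces to checking that none of the $560$ corner triples is monochromatic under $\psi$, i.e.\ none of the residue triples $\{b,b+k,b+4k\}\bmod 49$ with realized bases $b$; the paper settles this by direct enumeration.

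Your caution about restricting to realized bases is well founded. At $k=13$ the unrealized base $b=26$ gives the monochromatic residue triple $\{26,39,29\}$, so a check over all residues would fail.
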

 
\begin{remark}[The compressibility gap at $(3,3)$]\label{rem:gap33}
A (non-exhaustive) search over weights $(0,s,s{+}u)$ with $s\le6$,
$s{+}u\le12$ and moduli $m\le64$, with the bases restricted to those
realized in the cube, found no periodic palette line-free beyond $n=14$.
The anatomy of the $(4,2)$ record (Remark~\ref{rem:anatomy} below) suggests
why: the reach needed at $(3,3)$ is $21$, nine scales above the periodic
ceiling $K=12$ of Section~\ref{sec:bracket}, while the thinning of the
realized bases near the boundary bought only about two scales at these
moduli --- against exactly one scale needed at $(4,2)$. The resulting
ladder of witnesses at $(3,3)$, cheapest to strongest: arithmetic closed
form, $13$; the triples $\{0,1,3\}$ and $\{0,1,4\}$, $14$; the $49$-cell
periodic palette, $15$; the length-$76$ interval certificate, $16$
(Theorem~\ref{thm:hj33-16}); the $253$-cell radix table, $22$.
\end{remark}
 
\subsection{$\HJ(4,2)\ge14$ in closed form}
 
\begin{theorem}[One-weight closed form; computer-verified]\label{thm:hj42-14}
Let $\omega=(0,2,3,5)$ and let $\chi\colon\Z_{26}\to\{0,1\}$ be
\[
\chi=(1,0,1,0,0,1,1,1,1,0,0,1,0,0,0,1,0,0,1,1,1,1,0,0,1,0)
\]
\textup{(}listing $\chi(0),\dots,\chi(25)$\textup{)}. Then
$c(w)=\chi\bigl(\langle\omega,\type(w)\rangle\bmod26\bigr)$ has no
monochromatic line on $[4]^{13}$; hence
\[
\HJ(4,2)\ \ge\ 14 .
\]
The reach of this palette is sharp: at $n=14$ it leaves exactly three
monochromatic corner quadruples.
\end{theorem}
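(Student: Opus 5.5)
The plan is to reduce line-freeness on $[4]^{13}$ to a finite check on $\Z_{26}$ using the line--pattern correspondence, Lemma~\ref{lem:pattern}. The coloring is $c=c_{\omega,\chi'}$ with $\chi'(x)=\chi(x\bmod 26)$, which is one-weight. So a line with $k$ active coordinates and inactive type $v\in T^{(4)}_{13-k}$ is monochromatic iff $\chi$ is constant on the residues of $b+k\{0,2,3,5\}$, where $b=\langle\omega,v\rangle$.

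\emph{Step 1: list the realized pairs.} For each scale $k=1,\dots,13$, I will compute the set $B_k=\langle\omega,T^{(4)}_{13-k}\rangle\subseteq[0,5(13-k)]$. This is the set of sums of $13-k$ elements of $\{0,2,3,5\}$. Since $0$ is available for padding, $B_k$ is $\{0,2,3,4,5,\dots\}\cap[0,5(13-k)]$ with a few values missing near the top. Explicitly, the missing values are $1$ and $5(13-k)-1$, plus possibly a few others that the enumeration will settle. Only the pairs $(k\bmod 26,\,b\bmod 26)$ matter. For $k\le 13$ and $b\le 60$ this is a small finite list.

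\emph{Step 2: verify the pairs.} For each realized pair I will check that $\chi$ takes both values on $\{b,b+2k,b+3k,b+5k\}\bmod 26$. Equivalently, I can precompute the set $M\subseteq\Z_{26}\times\Z_{26}$ of pairs $(\beta,\kappa)$ for which $\beta+\kappa\{0,2,3,5\}$ is monochromatic under $\chi$, and confirm that no realized pair $(b,k)$ reduces into $M$. The content lies here. Because $\chi$ is periodic, most $\kappa$ (those with $\kappa\le 12$, say) should admit no monochromatic residues at all, which gives the bracket-type behaviour of Section~\ref{sec:bracket}. The remaining monochromatic pairs should sit only at $k=13$, and there only at bases outside $B_{13}=\{0\}$, or at bases in the boundary gaps of $B_k$. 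This is exactly the ``one boundary scale'' phenomenon. Theorem~\ref{thm:gallai-form} makes precise why only realized bases need checking, and Lemma~\ref{lem:reduction} (via Remark~\ref{rem:compression}) bounds the count: there are $\binom{16}{4}=1820$ corner quadruples, each of which can be checked by direct enumeration.

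\emph{Step 3: conclude and test sharpness.} Once the check passes, $\HJ(4,2)>13$, which gives the claimed bound $\HJ(4,2)\ge14$. For the sharpness claim I will redo the enumeration on $T^{(4)}_{14-k}$ for $k\le 14$ and record the monochromatic corner quadruples. I expect exactly three: one expected source is the new scale $k=14$ with base $0$, and the others are scale-$13$ (or nearby) bases newly realized near the top. I will count them directly.

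The main obstacle is Step 2. The argument is a finite verification with no structural shortcut, so the real work is organizing it so that it is hand-checkable rather than just a machine run. I would first tabulate $M$ by scale $\kappa$, hoping it is empty for all $\kappa\notin\{13\}$ modulo $26$. That would leave only the realized bases at $k=13$ to inspect, together with any scale whose residue collides with $13$. If $M$ turns out larger, I would fall back on listing, for each $k$, the few offending residues $\beta$ and showing that none is congruent to an element of $B_k$.
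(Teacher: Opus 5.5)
Your plan is correct and is essentially the paper's proof. Lemmas~\ref{lem:reduction} and~\ref{lem:pattern} reduce line-freeness to the $1820$ residue patterns $(b,b+2k,b+3k,b+5k)\bmod 26$, which are checked by enumeration; your table $M$ is exactly Remark~\ref{rem:anatomy}: no monochromatic residue for $1\le k\le 12$, and at $k=13$ every residue except $b\equiv 0,13$, with $B_{13}=\{0\}$.

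One correction to your sharpness forecast. At $k=14\equiv-12$ with base $0$, the residues $0,2,16,18$ carry colors $1,1,0,1$, so that quadruple is bichromatic; indeed $S=5-S$ makes scale $-k$ behave like scale $k$. The three monochromatic quadruples at $n=14$ all sit at $k=13$, at the newly realized bases $b=2,3,5$ from $T^{(4)}_1$, where the pattern collapses to $\chi(b)=\chi(b+13)$.
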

 
\begin{proof}
The coloring is symmetric, so by Lemma~\ref{lem:reduction} ($K=n=13$)
line-freeness is the non-monochromaticity of every corner quadruple
$C_{k,v}$, whose four cells carry the values $\chi(b)$, $\chi(b{+}2k)$,
$\chi(b{+}3k)$, $\chi(b{+}5k)$ with $b=\langle\omega,v\rangle\bmod26$. Direct enumeration of all $1820=\binom{16}{4}$ quadruples, over
$1\le k\le13$ and $v\in T^{(4)}_{13-k}$, finds none monochromatic; at $n=14$ the enlarged range ($2380$ quadruples)
produces exactly three.
\end{proof}
 
\begin{remark}[Three readings of $14$, and what the records
show]\label{rem:records}
The value $14$ arises three ways: as the period-$26$ witness above; as the
closed form $\lceil(G_2(\{0,2,3,5\})-1)/5\rceil=\lceil66/5\rceil=14$ with the
new Gallai number $G_2(\{0,2,3,5\})=67$ of Table~\ref{tab:gallai} (the weight
$(0,1,5,6)$, with $G_2=80$ and $D=6$, gives $14$ as well); and, since
one-weight equals symmetric (Theorem~\ref{thm:radix}), as
$\HJ_{\mathrm{sym}}(4,2)\ge14$. Three structural conclusions follow.
\emph{(i)} The van der Waerden reduction~\eqref{eq:vdw-shadow} --- tight in
every exactly known case --- is \emph{not} tight at $(4,2)$ or $(3,3)$: the
records $14>12$ and $22>13$ show the Hales--Jewett numbers exceeding their
van der Waerden shadows. \emph{(ii)} Neither witness is of sum type: a
sum-type coloring is line-free only up to
$\lceil(W(t,r)-1)/(t-1)\rceil-1$, i.e.\ $11$ at $(4,2)$ and $12$ at $(3,3)$,
short of $13$ and $21$. \emph{(iii)} Both witnesses are symmetric, so the
classical constructions (sum-type, hence symmetric) already live in a class
containing strictly better colorings --- the evidence for
Conjecture~\ref{conj:sym}.
\end{remark}
 
\begin{remark}[Anatomy of the record palette; computer-verified]\label{rem:anatomy}
Where the compression of Theorem~\ref{thm:hj42-14} comes from is itself
checkable. Testing the period-$26$ palette against \emph{all} bases: for
every $1\le k\le12$ and every $b\in\Z_{26}$ the pattern
$(b,\,b{+}2k,\,b{+}3k,\,b{+}5k)\bmod26$ is bichromatic --- the palette is an
extremal object of the bracket regime, matching the periodic ceiling $K=12$
of Section~\ref{sec:bracket} --- while at $k=13$ it fails at $24$ of the
$26$ residues, surviving only at the single realized base $b=0$ (from
$T^{(4)}_0$) and at the unrealized $b=13$. The record thus decomposes as \emph{bracket cap plus one
boundary scale}: reach $13=12+1$. The reach needed at $(4,2)$ sits one scale
above the periodic ceiling, which is why a $26$-cell palette suffices;
the contrast at $(3,3)$ is Remark~\ref{rem:gap33}.
\end{remark}
 
\section{Bracket Numbers and the Bracket Ceiling}\label{sec:bracket}
 
The Hales--Jewett theorem asserts a monochromatic line but says nothing about
its \emph{type}. Restricting the type of the line sought, and simultaneously
the class of colorings quantified over, produces a two-parameter family of
variants in which the one-weight machinery has genuinely different power.
 
\begin{definition}\label{def:bracket}
For a class $\mathcal C\in\{\mathrm{sum},\mathrm{ow},\mathrm{sym},
\mathrm{all}\}$, the \emph{bracket Hales--Jewett number}
$\HJ^{[K]}_{\mathcal C}(t,r)$ is the least $n$ such that every
$\mathcal C$-coloring of $[t]^n$ has a monochromatic line of
$L^{[K]}([t]^n)$, and $\infty$ if no such $n$ exists (subscript dropped when
$\mathcal C=\mathrm{all}$). The \emph{bracket ceiling} is
\[
\kappa_{\mathcal C}(t,r)=\max\{K\ge0:\ \HJ^{[K]}_{\mathcal C}(t,r)=\infty\}.
\]
\end{definition}
 
Since $L^{[K]}\subseteq L^{[K']}$ for $K\le K'$ the maximum is attained, and
since every line of $[t]^{\HJ}$ lies in $L^{[\HJ]}$ the ceilings are finite
and inherit the class order:
\begin{equation}\label{eq:ceiling-chain}
\ksum\ \le\ \kow\ \le\ \ksym\ \le\ \kall\ \le\ \HJ(t,r)-1 .
\end{equation}
(A line-free witness in a smaller class is in particular a witness for the
larger one.) A single number $\kappa_{\mathcal C}$ records the whole row
$(\HJ^{[K]}_{\mathcal C})_{K\ge1}$.
 
\begin{proposition}\label{prop:hj1}
$\HJ^{[1]}(t,r)=\infty$ for all $t,r\ge2$; indeed $\ksum(t,r)\ge1$.
\end{proposition}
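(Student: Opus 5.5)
We exhibit, for every $t,r\ge2$, a single sum-type coloring that has no monochromatic line with exactly one active coordinate, in every dimension $n$. By the class chain \eqref{eq:ceiling-chain}, $\ksum(t,r)\ge1$ then gives $\kow,\ksym,\kall\ge1$, hence $\HJ^{[1]}(t,r)=\infty$. It also suffices to treat $r=2$, since a $2$-coloring is in particular an $r$-coloring.

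By Lemma~\ref{lem:pattern} with $\omega=(1,2,\dots,t)$, a line with $k=1$ active coordinate and inactive type $v$ carries the levels $b+\{1,2,\dots,t\}$, where $b=\langle\omega,v\rangle$. These are $t\ge2$ consecutive integers. So we need a palette $\chi\colon\Z\to[2]$ that is nonconstant on every block of $t$ consecutive integers.

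Take the parity palette $\chi(m)=m\bmod2$, that is, $c(w)=\sigma(w)\bmod 2$. Any two consecutive integers have different parity, so no window of length $t\ge2$ is monochromatic. We can also check this directly in the cube: $\tau(1)$ and $\tau(2)$ differ in exactly one coordinate, by one letter, so their letter sums differ by $1$ and their colors differ. This holds for every $n$ and every root with $|I(\tau)|=1$, so no $n$ forces a monochromatic line of $L^{[1]}([t]^n)$.

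The argument is elementary and has no real obstacle. The only points needing care are the definitions. First, $\HJ^{[1]}=\infty$ requires a coloring for each $n$, and the parity coloring works uniformly in $n$. Second, in the definition of $\kappa$, $\HJ^{[1]}_{\mathrm{sum}}=\infty$ means exactly that $K=1$ belongs to the set whose maximum is taken.
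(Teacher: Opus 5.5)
Your proof is correct and is the paper's argument: color $w$ by $\sigma(w)\bmod 2$ and observe that a line with one active coordinate has words of consecutive letter sums, so the colors alternate. Your added bookkeeping (reducing to $r=2$, and passing from $\ksum(t,r)\ge1$ to $\HJ^{[1]}(t,r)=\infty$ via the chain of ceilings) is sound, though the parity coloring already witnesses $\HJ^{[1]}(t,r)=\infty$ directly.
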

 
\begin{proof}
Color $w$ by $\sigma(w)\bmod2$. A line with one active coordinate has $t$
words of consecutive weights, which alternate in color.
\end{proof}
 
\subsection{The sum-type ceiling}
 
\begin{lemma}[One-dimensional reduction]\label{lem:1d}
Let $c=\chi\circ\sigma$ be sum-type. A line with $k$ active coordinates and
inactive weight $S$ has word weights $S+k,S+2k,\dots,S+tk$, a $t$-term
progression of gap $k$, and the progressions realized in $[t]^n$ with
$k\le K$ are exactly the $t$-term progressions of gap $\le K$ inside
$[n,tn]$. Consequently, for every $K\ge1$,
\[
\HJ^{[K]}_{\mathrm{sum}}(t,r)=\infty
\iff
\exists\,\chi\colon\Z\to[r]\ \text{with no monochromatic $t$-term
progression of gap}\le K,
\]
so $\ksum(t,r)$ is the restricted-gap van der Waerden threshold of
Brown--Graham--Landman~\cite{BGL1999}.
\end{lemma}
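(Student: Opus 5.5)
The plan is to compute the weights directly, identify the realized progressions with a counting argument on possible inactive weights, and then pass from ``every dimension'' to ``all of $\Z$'' by compactness.

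\textbf{Step 1 (weights along a line).} For a root $\tau$ with $k$ active coordinates and inactive weight $S=\sum_{i\notin I(\tau)}\tau_i$, the word $\tau(a)$ has weight $\sigma(\tau(a))=S+ka$. This is immediate, and is the sum-type specialization of \eqref{eq:type-identity} with $\omega=(1,\dots,t)$. So the weights along $L_\tau$ form the progression $S+k,\dots,S+tk$ of gap $k$.

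\textbf{Step 2 (which progressions are realized).} The inactive part is a word in $[t]^{n-k}$, so $S$ ranges over exactly the integer interval $[n-k,\,t(n-k)]$, since every intermediate value is attained by raising letters one at a time. Hence the first term $x=S+k$ ranges over exactly $[n,\,tn-(t-1)k]$, with $1\le k\le n$. These are precisely the $t$-term progressions $\{x,x+k,\dots,x+(t-1)k\}$ contained in $[n,tn]$. Conversely, containment in $[n,tn]$ forces $(t-1)k\le(t-1)n$, so $k\le n$ holds automatically. Intersecting with the constraint $k\le K$ gives the stated description. By Lemma~\ref{lem:reduction}-style reasoning, a sum-type $c=\chi\circ\sigma$ therefore has no monochromatic line in $L^{[K]}([t]^n)$ iff $\chi$ restricted to $[n,tn]$ has no monochromatic $t$-term progression of gap $\le K$.

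\textbf{Step 3 (the equivalence).} For ``$\Leftarrow$'': if $\chi\colon\Z\to[r]$ avoids all monochromatic progressions of gap $\le K$, then by Step~2 the coloring $\chi\circ\sigma$ avoids monochromatic lines of $L^{[K]}$ in every dimension, so $\HJ^{[K]}_{\mathrm{sum}}=\infty$.

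For ``$\Rightarrow$'': for each $n$ we get a coloring of $[n,tn]$ avoiding such progressions. Translating it (avoidance is translation invariant) gives a good coloring $\chi_n$ of a window of length $(t-1)n+1$ centered at $0$. A diagonal (K\"onig) argument then applies, since each integer has only $r$ possible colors. Passing to a subsequence on which $\chi_n$ stabilizes on $[-m,m]$ for each $m$, we obtain a limit $\chi\colon\Z\to[r]$. Any monochromatic progression of gap $\le K$ under $\chi$ would lie in some $[-m,m]$, where $\chi$ agrees with some good $\chi_n$, which is a contradiction.

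The final sentence of the lemma is then just the definition of the Brown--Graham--Landman threshold, read through Definition~\ref{def:bracket}. The only point needing care is the exactness in Step~2, namely that every base in the interval $[n-k,t(n-k)]$ is realized; the compactness step is routine.
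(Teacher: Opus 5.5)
Your proposal is correct and follows essentially the same route as the paper: you compute $\sigma(\tau(a))=S+ak$, identify the realized bases with the full interval $[n-k,\,t(n-k)]$ (equivalently, containment of the progression in $[n,tn]$), restrict $\chi$ for one direction, and apply a K\H{o}nig-type compactness argument over growing windows for the other. Your explicit remark that containment in $[n,tn]$ already forces $k\le n$ is a small detail the paper leaves implicit.
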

 
\begin{proof}
The weights of $\tau(a)$ are $S+ak$; a progression
$\{A,A+k,\dots,A+(t-1)k\}\subseteq[n,tn]$ is realized iff
$S=A-k\in[n-k,t(n-k)]$, which is exactly containment in $[n,tn]$. The
right-to-left direction of the equivalence restricts $\chi$; the converse is
a compactness argument: colorings of arbitrarily long intervals avoiding
monochromatic $t$-progressions of gap $\le K$ exist, so K\H{o}nig's lemma
yields one of $\Z$, and any monochromatic progression of $\Z$ lies in some
finite window.
\end{proof}
 
Two forces bracket the sum-type ceiling. Every $r$-coloring of $[1,W(t,r)]$
has a monochromatic $t$-term progression, whose gap is at most
$\lfloor(W(t,r)-1)/(t-1)\rfloor$ --- a gap no coloring of $\Z$ avoids ---
while a periodic block palette avoids all smaller gaps:
\begin{equation}\label{eq:kstar-bounds}
(t-1)r-1\ \le\ \ksum(t,r)\ \le\
\Bigl\lfloor\tfrac{W(t,r)-1}{t-1}\Bigr\rfloor-1 .
\end{equation}
 
\begin{theorem}[Periodic block coloring]\label{thm:block}
Set $b=t-1$, $m=(t-1)r$, and let $g$ be the $m$-periodic palette
$0^{b}1^{b}\cdots(r-1)^{b}$, i.e.\ $g(x)=\lfloor(x\bmod m)/b\rfloor$. Under
$c=g\circ\sigma$ a line with $k$ active coordinates is monochromatic iff
$m\mid k$; hence $\HJ^{[m-1]}(t,r)=\infty$ and $\ksum(t,r)\ge(t-1)r-1$.
\end{theorem}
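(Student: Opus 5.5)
By Lemma~\ref{lem:1d}, a line with $k$ active coordinates and inactive weight $S$ has word weights $S+k,S+2k,\dots,S+tk$, which form a $t$-term progression of gap $k$. Under $c=g\circ\sigma$ the line is therefore monochromatic iff $g$ is constant on $\{A,A+k,\dots,A+(t-1)k\}$ with $A=S+k$. The task reduces to a statement about the $m$-periodic palette $g$: a $t$-term progression of gap $k\ge1$ is $g$-monochromatic iff $m\mid k$. Both consequences then follow at once. Every line of $L^{[m-1]}$ has $1\le k\le m-1$, so none is monochromatic and $\HJ^{[m-1]}(t,r)=\infty$. Since $c$ is sum-type, $\ksum(t,r)\ge m-1=(t-1)r-1$.

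\emph{If $m\mid k$}, every term of the progression is congruent to $A$ modulo $m$. Since $g$ depends only on $x\bmod m$, the progression is monochromatic; this direction is immediate.

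\emph{If $m\nmid k$}, write $\bar k=k\bmod m\in\{1,\dots,m-1\}$ and work on the circle $\Z_m$, which is partitioned into $r$ arcs (blocks) of length $b=t-1$. We must show that the $t$ points $A+jk\bmod m$, $0\le j\le t-1$, do not all lie in one block. Suppose they do, say all lie in block $B$, a set of $b$ consecutive residues. Then every consecutive step $\bar k$ moves from a point of $B$ to a point of $B$. Because $B$ is an arc of length $b<m$, the displacement $\bar k$ (read as a signed difference in $(-m/2,m/2]$, or directly) has absolute value at most $b-1$. The $t$ points all lie in $B$, which has only $t-1$ residues, so by pigeonhole two of them coincide: $ik\equiv jk\pmod m$ for some $0\le i<j\le t-1$. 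Hence $m\mid (j-i)k$ with $1\le j-i\le t-1$.

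The main obstacle is to rule out this residual case, where $(j-i)k\equiv0$ but $k\not\equiv0$. I would handle it by lifting to actual integers. Choose the representative of $A$ in $[0,m)$ that lies in $B$, viewed as the integer interval $[\beta b,\beta b+b-1]$. The first step lands in $B$ only if $k\bmod m$ is within $b-1$ of $0$ on the circle, so replace $k$ by its signed residue $k'\in[-(b-1),b-1]\setminus\{0\}$. The lifted progression $A,A+k',\dots,A+(t-1)k'$ then stays inside the integer interval $B$, because each residue in $B$ has a unique lift there and steps of size less than $b$ cannot wrap around $m>b$; this needs $r\ge2$, which gives $m\ge2b>b$. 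This progression consists of $t$ distinct integers, since $k'\ne0$, but it sits in an interval of $t-1$ integers, a contradiction. I will write out the wrap-around argument carefully, since that is the step where an error is most likely.
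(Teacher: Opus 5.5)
Your proof is correct and follows essentially the paper's route. You reduce to $t$-term progressions via Lemma~\ref{lem:1d}, dispose of $m\mid k$ trivially, and for $m\nmid k$ force the signed step into $[-(b-1),b-1]\setminus\{0\}$, so that the lifted progression puts $t$ distinct integers into an interval of $t-1$; this is the paper's ``leaves the arc after at most $b$ terms'' argument, phrased by contradiction. Two minor points: the pigeonhole detour is unnecessary, and the no-wrap step should say that the lift difference and $k'$ are congruent mod $m$ and both lie in $[-(b-1),b-1]$, hence equal because $m\ge 2b>2(b-1)$ (merely $m>b$ would not suffice).
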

 
\begin{proof}
By Lemma~\ref{lem:1d} the line is monochromatic iff $g$ is constant on a
$t$-term progression of gap $k$. If $m\mid k$ all $t$ weights agree modulo
$m$. If $m\nmid k$, put $d=k\bmod m\in[1,m-1]$ and let $s=\min(d,m-d)$ be the
circular step. Each color class is an arc of $b$ consecutive residues modulo
$m$, and two residues in one arc differ circularly by at most $b-1$. If
$s\ge b$, consecutive terms of the progression lie in different arcs, so no
two consecutive terms share a color. If $s<b$, then since $m\ge2b$ the unique
representative $d'\equiv d$ of absolute value $s$ lies in $(-b,b)$, and
successive terms advance within an arc by the fixed signed step $d'$, leaving
it after at most $\lfloor(b-1)/s\rfloor+1\le b$ terms. Either way a
monochromatic run has at most $b=t-1<t$ terms.
\end{proof}
 
\begin{proposition}\label{prop:kstar}
$\ksum(2,r)=r-1$, $\ksum(3,2)=3$, and \textup{(}computer-assisted\textup{)}
$\ksum(3,3)=11$: the $12$-periodic palette
\[
(2,0,1,2,1,1,0,1,2,0,0,2)
\]
has no monochromatic $3$-term progression of gap $\le11$, and no
$3$-coloring of $\Z$ avoids all gaps $\le12$.
\end{proposition}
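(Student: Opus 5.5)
By Lemma~\ref{lem:1d}, each claim reduces to a question about colorings of $\Z$ that avoid monochromatic $t$-term progressions whose gap is at most $K$. The proof has an easy part ($t=2$ and $(3,2)$) and a computational part ($(3,3)$). In every case I combine a lower bound from a periodic palette, via Theorem~\ref{thm:block} or an explicit table, with an upper bound showing that every coloring has a monochromatic progression of some gap $\le K+1$.

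\emph{The case $t=2$.} Theorem~\ref{thm:block} gives $\ksum(2,r)\ge r-1$. For the upper bound, a $2$-term progression of gap $\le r$ is just a pair $\{x,x+k\}$ with $1\le k\le r$. Any $r+1$ consecutive integers $x,\dots,x+r$ contain two of the same color by pigeonhole, and their distance is at most $r$. So every coloring has such a pair, and $\ksum(2,r)\le r-1$. The same bound also follows from~\eqref{eq:kstar-bounds} with $W(2,r)=r+1$.

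\emph{The case $(3,2)$.} Here~\eqref{eq:kstar-bounds} gives $3\le\ksum(3,2)\le\lfloor 8/2\rfloor-1=3$, using $W(3,2)=9$. The lower bound is Theorem~\ref{thm:block} with $m=4$, that is, the palette $0011$ repeated.

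\emph{The case $(3,3)$, lower bound.} The bounds~\eqref{eq:kstar-bounds} only give $5\le\ksum(3,3)\le12$, so both directions need computation. For $\ksum(3,3)\ge11$, extend the displayed $12$-periodic palette to $\Z$. A monochromatic progression $\{x,x+k,x+2k\}$ depends only on $x\bmod 12$ and $k$, so it suffices to check all $12\cdot11$ pairs $(x\bmod12,\,k)$ with $1\le k\le11$ and confirm that none is monochromatic. This is a finite, hand-checkable table. Lemma~\ref{lem:1d} then gives $\HJ^{[11]}_{\mathrm{sum}}(3,3)=\infty$.

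\emph{The case $(3,3)$, upper bound.} This is the main obstacle: no $3$-coloring of $\Z$ avoids all gaps $\le12$. By compactness (the converse direction in the proof of Lemma~\ref{lem:1d}), it suffices to find a finite $N$ such that every $3$-coloring of $[1,N]$ has a monochromatic $3$-term progression of gap $\le12$. Since $W(3,3)=27$ and any progression inside $[1,27]$ has gap $\le13$, the van der Waerden bound just misses. Gap exactly $13$ occurs only for $\{1,14,27\}$, so $N=27$ is tempting, but progressions of gap $13$ cannot simply be excluded there. I would therefore encode the problem as a SAT instance. The variables are $x_{i,c}$ for $i\le N$ and $c\in[3]$, with exactly-one constraints per cell and a clause forbidding each monochromatic triple of gap $\le12$. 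I would increase $N$ until the instance becomes unsatisfiable, expecting this somewhere in the range $N\approx 30$--$40$, and confirm the result with two independent solvers. To speed up the search, impose symmetry breaking by color permutation, e.g. fix the color of position $1$.

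I expect the only real difficulty to be this unsatisfiability certificate, which, as the Verification paragraph warns, must rest on solvers rather than on direct enumeration. If the instance resists, I would try to shorten it by exploiting the translation invariance of the constraints. A proof-log such as DRAT would make the forcing claim independently checkable.
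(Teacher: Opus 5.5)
Your proposal is correct and follows the paper's proof. The cases $(2,r)$ and $(3,2)$ come from the two sides of~\eqref{eq:kstar-bounds} meeting, the $(3,3)$ lower bound from checking the $12\times11$ residue--gap pairs of the palette, and the $(3,3)$ upper bound from a finite interval computation whose result the paper records: the longest admissible interval has length $26$. So $N=27=W(3,3)$ already forces a monochromatic progression of gap $\le12$, rather than your guessed $N\approx30$--$40$; equivalently, the lone gap-$13$ triple $\{1,14,27\}$ can never be the only monochromatic one.
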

 
\begin{proof}
At $(2,r)$ and $(3,2)$ the two sides of~\eqref{eq:kstar-bounds} meet
($W(2,r)=r+1$, $W(3,2)=9$). At $(3,3)$ the displayed palette was checked
directly over all $12\times11$ residue--gap pairs (verified for this note);
for the upper bound, a finite computation~\cite{thesis} shows the longest
$3$-coloring of an interval avoiding monochromatic $3$-progressions of gap
$\le12$ has length $26<27=W(3,3)$, so every $27$-point window forces one.
\end{proof}
 
The block reach is linear in $t$; the multiplicative structure of $\Z/p$
does far better. Rabung's power-residue method~\cite{Rabung1979}, the
standard source of strong van der Waerden lower bounds, colors residues by
discrete logarithm.
 
\begin{theorem}[Power-residue coloring, after Rabung]\label{thm:character}
Let $p$ be prime, $r\mid p-1$, $g$ a primitive root modulo $p$, and
$\chi(x)=\operatorname{ind}_g(x)\bmod r$ for $x\not\equiv0$, $\chi(0)=0$,
extended $p$-periodically. If $\chi$ has no monochromatic $t$-term
progression of nonzero gap in $\Z/p$, then $\HJ^{[p-1]}(t,r)=\infty$ and
$\ksum(t,r)\ge p-1$.
\end{theorem}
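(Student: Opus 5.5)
The plan is to reduce the statement to Lemma~\ref{lem:1d}, exactly as in the proof of Theorem~\ref{thm:block}. The coloring $c=\chi\circ\sigma$ of $[t]^n$ is sum-type. By Lemma~\ref{lem:1d}, a line with $k$ active coordinates is monochromatic iff $\chi$ is constant on the $t$-term progression $S+k,S+2k,\dots,S+tk$ of gap $k$. It therefore suffices to show the following: for every $1\le k\le p-1$ and every integer base $A$, the progression $A,A+k,\dots,A+(t-1)k$ is not monochromatic under the $p$-periodic extension of $\chi$. Once this holds, every $c=\chi\circ\sigma$ is free of monochromatic lines in $L^{[p-1]}([t]^n)$ for every $n$. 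That gives $\HJ^{[p-1]}(t,r)=\infty$ (indeed even in the sum class), hence $\ksum(t,r)\ge p-1$ by Definition~\ref{def:bracket}.

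The key step is passing from integer progressions to progressions in $\Z/p$. Since $\chi$ is $p$-periodic, its value at $A+jk$ depends only on $A+jk\bmod p$. Reduction modulo $p$ sends the integer progression to the progression $\bar A+j\bar k$ in $\Z/p$, with $j=0,\dots,t-1$. The gap satisfies $\bar k\ne0$ precisely because $1\le k\le p-1$. By hypothesis no $t$-term progression of nonzero gap in $\Z/p$ is monochromatic under $\chi$, so the integer progression is not monochromatic either. Gaps $k\equiv0\pmod p$, with $k\ge p$, are excluded from $L^{[p-1]}$, and indeed those progressions are constant. This is why $p-1$ is the natural cap.

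There are two subtleties to settle. First, we need $t\le p$: if $t>p$, some terms repeat modulo $p$, and the hypothesis should still be read as a statement about the $t$ listed terms (a ``progression'' in $\Z/p$ with possibly repeated terms). I would state that convention explicitly, noting that the hypothesis can only hold in a meaningful way when the listed terms are considered. Second, the progression may pass through the residue $0$, where $\chi(0)=0$ is assigned by fiat. This is already covered, because the hypothesis is on $\chi$ as defined on all of $\Z/p$, including $0$. I expect no real obstacle. The only care needed is in the reduction-mod-$p$ step and in noting that the hypothesis on $\chi$ is exactly what is consumed, with the power-residue structure (the choice of $g$ and $r\mid p-1$) playing no role in the implication itself. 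That structure only matters for verifying the hypothesis at specific primes, which is a finite check.
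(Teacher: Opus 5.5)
Your proposal is correct and matches the paper's proof: both apply Lemma~\ref{lem:1d} to the sum-type coloring $\chi\circ\sigma$, then use $p$-periodicity to turn a gap-$k$ progression with $1\le k\le p-1$ into a nonzero-gap progression in $\Z/p$, which the hypothesis forbids. Your worry about $t>p$ is harmless and needs no extra convention: the hypothesis still governs the listed terms, and such a progression runs through all of $\Z/p$, so it is automatically bichromatic once $r\ge2$.
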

 
\begin{proof}
By Lemma~\ref{lem:1d} and $p$-periodicity, a line with $1\le k\le p-1$ active
coordinates is monochromatic iff $\chi$ is constant on a $t$-term progression
of gap $k\not\equiv0\pmod p$, which the hypothesis forbids.
\end{proof}
 
\begin{corollary}[Exact ceilings; instances verified]\label{cor:character}
If $p:=\lfloor(W(t,r)-1)/(t-1)\rfloor$ is prime with $r\mid p-1$ and
Theorem~\ref{thm:character} applies, then
$\ksum(t,r)=\lfloor(W(t,r)-1)/(t-1)\rfloor-1$. This occurs at $(4,2)$ with
$p=11$ and at $(4,3)$ with $p=97$ ($W(4,3)=293$~\cite{Rabung1979,LandmanRobertson}):
\[
\ksum(4,2)=10,\qquad \ksum(4,3)=96 .
\]
At the largest admissible primes the method further gives
$\ksum(5,2)\ge36$ and $\ksum(6,2)\ge138$ ($p=37,139$), far above the block
values $2t-3$.
\end{corollary}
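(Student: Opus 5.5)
The plan is to prove matching lower and upper bounds on $\ksum(t,r)$. The upper bound holds in general, and the lower bound comes from Theorem~\ref{thm:character}, checked at the two named instances.

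\emph{Upper bound.} This is the right-hand side of~\eqref{eq:kstar-bounds}, so we only restate its justification. Put $N=W(t,r)$ and take any $\chi\colon\Z\to[r]$. Restricted to $[1,N]$, it has a monochromatic $t$-term progression. The gap $k$ of that progression satisfies $(t-1)k\le N-1$, so $k\le\lfloor(N-1)/(t-1)\rfloor=p$. Hence no coloring of $\Z$ avoids all gaps $\le p$. By Lemma~\ref{lem:1d} this gives $\HJ^{[p]}_{\mathrm{sum}}(t,r)<\infty$, and therefore $\ksum(t,r)\le p-1$.

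\emph{Lower bound.} By hypothesis $p$ is prime, $r\mid p-1$, and the power-residue coloring modulo $p$ has no monochromatic $t$-term progression of nonzero gap in $\Z/p$. Theorem~\ref{thm:character} then gives $\ksum(t,r)\ge p-1$. Together with the upper bound this yields equality, which is the general statement of the corollary.

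\emph{The instances.} At $(4,2)$ we have $W(4,2)=35$, so $p=\lfloor 34/3\rfloor=11$. This is prime and $2\mid10$. At $(4,3)$ we have $W(4,3)=293$, so $p=\lfloor292/3\rfloor=97$. This is prime and $3\mid96$. It remains to verify the hypothesis of Theorem~\ref{thm:character} in each case. Fix a primitive root ($2$ mod $11$ and $5$ mod $97$). The quadratic-residue and cubic-residue classes are multiplicatively invariant, so the condition is invariant under scaling the gap. It therefore suffices to check, for gap $k=1$, each base $b\in\Z/p$ that $\{b,b+1,b+2,b+3\}$ is not monochromatic. This is a check of $p$ cases, and for $p=11$ it is done by hand.

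The point $0$ needs care. It gets color $0$, which is also the color of the residues with index $\equiv0$. So progressions through $0$ are not automatically fine and must be included in the check. The scaling reduction handles them anyway, because multiplying by a unit fixes $0$ and preserves index classes.

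The remaining bounds $\ksum(5,2)\ge36$ and $\ksum(6,2)\ge138$ follow from Theorem~\ref{thm:character} by the same finite check at $p=37$ and $p=139$, and here only the lower bound is claimed. I expect the main obstacle to be certifying the $p=97$ cubic-residue instance (and likewise $p=139$). This is a routine enumeration over the $p$ bases after the scaling reduction, and I would carry it out by direct computation rather than by any character-sum estimate.
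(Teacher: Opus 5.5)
Your route is the paper's. The upper bound is the right-hand side of~\eqref{eq:kstar-bounds}, the lower bound is Theorem~\ref{thm:character}, and the instances are finite checks. Remark~\ref{rem:character} uses the same reduction to gap $1$, with $0$ included in the runs. However, your justification for progressions through $0$ is false, even though its conclusion holds.

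Multiplication by a unit $u$ does \emph{not} preserve the index classes. It shifts every nonzero color by $\operatorname{ind}_g(u)\bmod r$, while $\chi(0)=0$ stays fixed. For example, at $p=11$ the unit $u=2$ sends $1$ (color $0$) to $2$ (color $1$). So for a progression through $0$, the map $x\mapsto d^{-1}x$ preserves monochromaticity only when $d$ is an $r$-th power residue. In general, a monochromatic gap-$1$ run through $0$ need not lift to a monochromatic gap-$d$ progression, and your stated reason does not give the implication you need.

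Here is the repair, which uses only the direction actually required. A monochromatic progression through $0$ must have common color $\chi(0)=0$. It contains a neighbor $u\in\{d,-d\}$ of $0$, and $\chi(u)=0$ says that $u$ is an $r$-th power. The map $x\mapsto u^{-1}x$ therefore preserves every color, including that of $0$. It carries the progression to a monochromatic progression of gap $\pm1$.

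With this fix, your check of the $p$ gap-$1$ windows (with $0$ colored $0$) does certify the hypothesis of Theorem~\ref{thm:character}. The rest of your proposal stands as written.
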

 
\begin{remark}\label{rem:character}
Since $\operatorname{ind}_g$ is additive, $x\mapsto d^{-1}x$ carries a
monochromatic gap-$d$ progression to a gap-$1$ one, so admissibility of $p$
reduces to runs of consecutive power residues --- except that the artificial
value $\chi(0)$ can bridge the two runs flanking $0$: for $t=4$ the prime
$17$ fails because its Legendre coloring is constant on $\{15,16,0,1\}$,
consistent with $\ksum(4,2)=10$. Runs of power residues are governed by
Weil--Burgess estimates and were tabulated
in~\cite{BuellHudson1984,LehmerLehmer1962}, so admissible primes lie in a
bounded range and locating $\ksum$ inside~\eqref{eq:kstar-bounds} is a
question of analytic number theory. Where the plain construction trails the
upper bound ($36<43$ at $(5,2)$, $138<225$ at $(6,2)$), the Cyclic Zipper
Method~\cite{HerwigHeuleZipper2007,RabungLotts2012} is the natural route; its
periodic adaptation is open. No admissible prime exists at $(3,3)$,
consistent with $\ksum(3,3)=11$.
\end{remark}
 
\subsection{One-weight palettes pass the sum-type ceiling}
 
In the bracket regime the truncation $k\le K$ leaves only finitely many
homothet shapes modulo a period, which a fixed periodic palette on a
\emph{non-arithmetic} weight can avoid simultaneously --- past the van der
Waerden ceiling that binds sum-type colorings.
 
\begin{theorem}[$\HJ^{[12]}(3,3)=\infty$; computer-verified]\label{thm:hj12-33}
For $\omega=(0,5,7)$ and the $13$-periodic palette
\[
\psi=(1,0,0,1,0,1,0,0,1,2,2,2,2)
\qquad(\text{listing }\psi(0),\dots,\psi(12)),
\]
the coloring $c(w)=\psi\bigl(\langle\omega,\type(w)\rangle\bmod13\bigr)$ has
no monochromatic line with at most $12$ active coordinates, in every
dimension $n$.
\end{theorem}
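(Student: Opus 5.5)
By Lemma~\ref{lem:pattern}, a line with $k$ active coordinates and inactive type $v$ is monochromatic under $c$ iff $\psi$ is constant on the residues of $b,\ b+5k,\ b+7k$ modulo $13$, where $b=\langle\omega,v\rangle$. Both the realized bases and the dimension $n$ enter only through $b\bmod 13$. So it suffices to prove a dimension-free statement: for every $b\in\Z_{13}$ and every $1\le k\le12$, the triple
\[
\bigl(\psi(b),\ \psi(b+5k),\ \psi(b+7k)\bigr)\qquad(\text{indices mod }13)
\]
is not constant. We ignore which bases are actually realized in $[3]^n$ and test all of them. This is stronger than needed, but it makes the conclusion hold uniformly in $n$.

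The key structural point is that $13$ is prime and $k\not\equiv0\pmod{13}$ for $1\le k\le12$. Hence $5k$ and $7k$ are nonzero and distinct modulo $13$, because $7k-5k=2k\not\equiv0$. Each pattern is therefore a genuine $3$-subset of $\Z_{13}$. As $k$ runs over $1,\dots,12$, the pair $(5k,7k)\bmod 13$ runs over the twelve pairs $(u,\,7\cdot5^{-1}u)=(u,4u)$ with $u\ne0$. Indeed $5^{-1}\equiv8$, and $7\cdot 8=56\equiv4\pmod{13}$.

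The claim thus becomes: $\psi$ has no monochromatic set $\{b,\,b+u,\,b+4u\}$ with $u\in\Z_{13}^\times$ and $b\in\Z_{13}$. That is a finite check over $13\times12=156$ residue--shift pairs, and it is exactly how the statement is labelled computer-verified.

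I would organize the check by color class and use the multiplicative reduction as a sanity check. The color classes are $\psi^{-1}(0)=\{1,2,4,6,7\}$, $\psi^{-1}(1)=\{0,3,5,8\}$ and $\psi^{-1}(2)=\{9,10,11,12\}$. For each class $A$, we must show that no $b\in A$ and $u\ne0$ have both $b+u$ and $b+4u$ in $A$. Equivalently, for each ordered pair of distinct $x,y\in A$, set $u=y-x$ and check that $x+4u=4y-3x\notin A$. This gives $20+12+12=44$ ordered pairs to test, each a single arithmetic step mod $13$. For example, in the class $\{9,10,11,12\}$ every difference $u$ lies in $\{\pm1,\pm2,\pm3\}$, so $4u\in\{\pm4,\pm8,\pm12\}$, and one confirms that $x+4u$ lands in $\{0,\dots,8\}$.

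The main obstacle is not conceptual; the reduction is immediate from Lemma~\ref{lem:pattern}. The one place where care is needed is the class of color $0$, which has five elements and so contributes the most pairs. I would tabulate $4y-3x$ for its $20$ ordered pairs explicitly. If any pair failed, the next step would be to check whether the offending base is actually unrealized, as in the boundary phenomenon of Remark~\ref{rem:anatomy}. For a claim uniform in every $n$, however, that escape is unavailable: for large $n$, all residues are realized bases. The verification must therefore succeed over all of $\Z_{13}$, and I expect it does.
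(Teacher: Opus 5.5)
Your proposal is correct and follows the paper's argument: Lemma~\ref{lem:pattern} plus $13$-periodicity reduces the claim to the $156$ pairs $(b,k)\in\Z_{13}\times\{1,\dots,12\}$, and the paper checks these directly. Your substitution $u=5k$ (so $7k\equiv4u$) only repackages that finite check as the $44$ ordered pairs $x\neq y$ within a color class; carrying it out, $4y-3x$ leaves the class every time, so the verification you anticipated does succeed.
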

 
\begin{proof}
By Lemma~\ref{lem:pattern} the words of a line of scale $k$ carry
$\psi(b),\psi(b+5k),\psi(b+7k)$ with $b=\langle\omega,v\rangle\bmod13$. It
suffices that no pair $(b,k)$ with $b\in\Z_{13}$, $1\le k\le12$, is
monochromatic --- a check of $156$ triples, all bichromatic (verified). At
$k\equiv0\pmod{13}$ the three indices coincide, so the cap $12$ is sharp for
this palette.
\end{proof}
 
\begin{theorem}[$\HJ^{[12]}(4,2)=\infty$; computer-verified]\label{thm:hj12-42}
For $\omega=(0,2,3,5)$ and $\chi_0\colon\Z_{13}\to\{0,1\}$ with
$\chi_0^{-1}(1)=\{4,5,7,9,11,12\}$, the coloring
$c(w)=\chi_0\bigl(\langle\omega,\type(w)\rangle\bmod13\bigr)$ has no
monochromatic line with at most $12$ active coordinates, in every dimension.
\end{theorem}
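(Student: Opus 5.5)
By Lemma~\ref{lem:pattern}, a line of $[t]^n$ with $k$ active coordinates and inactive type $v$ carries the four levels $b+k\{0,2,3,5\}$ with $b=\langle\omega,v\rangle$. The coloring reads these levels modulo $13$, so the line is monochromatic iff $\chi_0$ is constant on
\[
Q(b,k)=\{b,\;b+2k,\;b+3k,\;b+5k\}\subseteq\Z_{13},\qquad b\equiv\langle\omega,v\rangle \pmod{13}.
\]
As in the proof of Theorem~\ref{thm:hj12-33}, I will discard the information about which bases are realized in the cube. It suffices to show that $Q(b,k)$ is bichromatic for \emph{every} $b\in\Z_{13}$ and every $1\le k\le12$. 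This condition does not mention $n$, so it gives the conclusion in every dimension at once. It is also the natural form of the claim, since Remark~\ref{rem:anatomy} already presents the $(4,2)$ palette as a bracket-regime object with ceiling $12$.

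The check is finite: $13\times12=156$ quadruples. I would first halve it using the symmetry of the weight set. Since $5-\{0,2,3,5\}=\{0,2,3,5\}$,
\[
Q(b,k)=(b+5k)-k\{0,2,3,5\}=Q(b+5k,\,13-k)\quad\text{in }\Z_{13}.
\]
So the scales $k$ and $13-k$ produce the same family of quadruples, and only $k=1,\dots,6$ need to be examined: $78$ quadruples in total.

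For each such $k$, I list the four residues $b,b+2k,b+3k,b+5k \bmod 13$ as $b$ runs over $\Z_{13}$. I then confirm that each quadruple meets both $\chi_0^{-1}(1)=\{4,5,7,9,11,12\}$ and its complement $\{0,1,2,3,6,8,10\}$. An equivalent and faster way to organize this is to fix $k$ and mark, for each color class $A$, the bases $b$ with $b\in A$, $b+2k\in A$, $b+3k\in A$, $b+5k\in A$. The verification succeeds if these four shifted copies of $A$ have empty intersection, for both color classes and every $k\le 6$.

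The only real obstacle is this exhaustive verification itself: no conceptual argument (multiplicative invariance, a power-residue structure as in Theorem~\ref{thm:character}) is apparent for this particular palette. The plan is therefore to treat it, like Theorem~\ref{thm:hj12-33}, as a computer-verified (or hand-tabulated) check of the $78$ reduced quadruples.

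Finally, the cap $12$ is sharp for this construction. At $k\equiv0\pmod{13}$ all four indices coincide, so $Q(b,k)=\{b\}$ is trivially monochromatic. This matches the anatomy in Remark~\ref{rem:anatomy} and the inequality $\kow(4,2)\ge12>10=\ksum(4,2)$.
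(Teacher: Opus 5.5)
Your proposal is correct and is essentially the paper's proof: Lemma~\ref{lem:pattern} reduces the claim to bichromaticity of the residue patterns $(b,b+2k,b+3k,b+5k)\bmod 13$ for all $b\in\Z_{13}$ and $1\le k\le12$. That condition does not depend on $n$ and is then checked directly. Your reflection identity $Q(b,k)=Q(b+5k,13-k)$, which comes from $S=5-S$, is valid and halves the check to $k\le6$; carrying out those $78$ cases confirms that every quadruple meets both color classes.
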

 
\begin{proof}
The $156$ residue patterns $(b,\,b{+}2k,\,b{+}3k,\,b{+}5k)$, $b\in\Z_{13}$,
$1\le k\le12$, are checked directly; none is monochromatic (verified).
\end{proof}
 
Together with Proposition~\ref{prop:kstar} and
Corollary~\ref{cor:character}, the chains at the two record pairs read
\[
11=\ksum(3,3)\ <\ 12\le\kow(3,3),
\qquad
10=\ksum(4,2)\ <\ 12\le\kow(4,2):
\]
non-arithmetic one-weight palettes strictly separate the one-weight from the
sum-type ceiling. An exhaustive search over weights and moduli in the range
of~\cite{thesis} found no \emph{periodic} one-weight palette exceeding
$K=12$ at either pair, both optima occurring at modulus $13$; palettes that
are not eventually periodic remain unexplored
(Question~\ref{q:weights}). The record palette of
Theorem~\ref{thm:hj42-14} is itself such an extremal object up to $K=12$
(Remark~\ref{rem:anatomy}): the reach record and the bracket ceiling are one
construction seen at two scales. Table~\ref{tab:ceilings} summarizes.
 
\begin{table}[ht]
\centering
\caption{Bracket ceilings along the coloring hierarchy;
$\ksum\le\kow\le\ksym\le\kall\le\HJ-1$ throughout. Unmarked lower bounds on
$\kow,\ksym,\kall$ are inherited from the column to their left; no
construction is known to separate $\kow$, $\ksym$, $\kall$. Collapse
(Conjecture~\ref{conj:collapse}) predicts $\kall=\HJ-1$; $\HJ(4,3)\ge98$
is the van der Waerden shadow~\eqref{eq:vdw-shadow} of $W(4,3)=293$.}
\label{tab:ceilings}
\begin{tabular}{@{}cccccc@{}}
\toprule
$(t,r)$ & $\HJ(t,r)$ & $\ksum$ & $\kow$ & $\ksym$ & $\kall$\\
\midrule
$(2,r)$ & $r$ & $r-1$ & $r-1$ & $r-1$ & $r-1$\\
$(3,2)$ & $4$ & $3$ & $3$ & $3$ & $3$\\
$(3,3)$ & $\ge22$ & $11$ & $\ge12$ & $\ge12$ & $\ge12$\\
$(4,2)$ & $\ge14$ & $10$ & $\ge12$ & $\ge12$ & $\ge12$\\
$(4,3)$ & $\ge98$ & $96$ & $\ge96$ & $\ge96$ & $\ge96$\\
$(5,2)$ & $\ge45$ & $\ge36$ & $\ge36$ & $\ge36$ & $\ge36$\\
$(6,2)$ & $\ge227$ & $\ge138$ & $\ge138$ & $\ge138$ & $\ge138$\\
\bottomrule
\end{tabular}
\end{table}
 
\section{Interval Numbers: an Axis the Weight Cannot See}\label{sec:interval}
 
Write $L^{(q)}([t]^n)$ for the lines whose active set is a union of at most
$q$ subintervals of $[n]$ (\emph{$q$-fold} lines); the \emph{interval
Hales--Jewett number} $\HJ^{(q)}_{\mathcal C}(t,r)$ and the \emph{interval
ceiling} $\lambda_{\mathcal C}(t,r)=\max\{q\ge0:\HJ^{(q)}_{\mathcal
C}(t,r)=\infty\}$ are defined as in Definition~\ref{def:bracket}. Since
every subset of $[n]$ is a union of at most $\lceil n/2\rceil$ intervals,
$\lambda_{\mathcal C}\le\lceil\HJ/2\rceil-1$. The interval variant was
introduced by Conlon and Kam\v{c}ev~\cite{conlonAndKamcev} and further
studied by Leader and R\"aty~\cite{leader2018noteintervalshalesjewetttheorem}
and Kam\v{c}ev and
Spiegel~\cite{kamcev2018noteintervalshalesjewetttheorem}; it is motivated by
Shelah's proof~\cite{s.proof_of_HJ}, which produces monochromatic $q$-fold
lines with $q\le\HJ(t-1,r)$.
 
\begin{proposition}\label{prop:interval-vs-bracket}
$\lambda_{\mathcal C}(t,r)\le\kappa_{\mathcal C}(t,r)$ for every class
$\mathcal C$, and $\lambda_{\mathrm{sum}}(t,r)=0$ for all $t,r$.
\end{proposition}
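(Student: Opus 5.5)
The plan is to compare line families by inclusion and then use the fact, from Lemma~\ref{lem:reduction}, that symmetric colorings see only the pair $(k,v)$ of a line and never the position of its active set.

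\emph{The inequality $\lambda_{\mathcal C}\le\kappa_{\mathcal C}$.} Every singleton $\{i\}\subseteq[n]$ is an interval. Hence a line with at most $q$ active coordinates has an active set that is a union of at most $q$ intervals, which gives
\[
L^{[q]}([t]^n)\subseteq L^{(q)}([t]^n)\qquad\text{for every } n,q .
\]
Put $q=\lambda_{\mathcal C}(t,r)$; if $q=0$ there is nothing to prove. By definition, $\HJ^{(q)}_{\mathcal C}(t,r)=\infty$, so for every $n$ some $\mathcal C$-coloring of $[t]^n$ has no monochromatic line in $L^{(q)}$. By the inclusion, that coloring has no monochromatic line in $L^{[q]}$ either. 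Therefore $\HJ^{[q]}_{\mathcal C}(t,r)=\infty$, and so $\kappa_{\mathcal C}(t,r)\ge q$. This is monotonicity in the family of lines, exactly as in the remark preceding~\eqref{eq:ceiling-chain}.

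\emph{The vanishing $\lambda_{\mathrm{sum}}=0$.} Here I will prove the stronger statement $\lambda_{\mathrm{sym}}=0$ and deduce the sum case from $\mathrm{sum}\subseteq\mathrm{sym}$. The key point is the realization step in the proof of Lemma~\ref{lem:reduction}: every pair $(k,v)$ is realized by the root ${*}^{k}1^{v_1}\cdots t^{v_t}$, whose active set $\{1,\dots,k\}$ is a single interval. So for a symmetric coloring $c=\bar c\circ\type$ the following are equivalent: $c$ has a monochromatic line in $L^{(1)}([t]^n)$; some corner tuple $C_{k,v}$ is monochromatic under $\bar c$; $c$ has a monochromatic line at all. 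Now take $n=\HJ(t,r)$, which is finite by Hales--Jewett. Every $r$-coloring of $[t]^n$ has a monochromatic line, so every symmetric coloring of $[t]^n$ has a monochromatic interval line. Thus $\HJ^{(1)}_{\mathrm{sym}}(t,r)\le\HJ(t,r)<\infty$. Since $\HJ^{(0)}=\infty$ trivially (there are no lines with an empty active set), the ceiling is $\lambda_{\mathrm{sym}}=0$. The same holds for $\lambda_{\mathrm{sum}}$ and $\lambda_{\mathrm{ow}}$, because a smaller class makes the threshold $\HJ^{(1)}_{\mathcal C}$ smaller, exactly as for $\HJ_{\mathcal C}$.

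I do not expect a real obstacle. The only point needing care is to make sure the realizing root in Lemma~\ref{lem:reduction} can always be chosen with a contiguous active block. It can, because a coordinate permutation moves any active set to $\{1,\dots,k\}$ without changing types. This is also what makes the symmetric machinery blind to the interval axis.
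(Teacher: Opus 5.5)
Your argument is correct. The first half, the inclusion $L^{[q]}\subseteq L^{(q)}$ and hence $\lambda_{\mathcal C}\le\kappa_{\mathcal C}$, is exactly the paper's argument.

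For $\lambda_{\mathrm{sum}}=0$ you take a genuinely different route. You prove the stronger interval-blindness of the whole symmetric class: every pair $(k,v)$ is realized by the interval root ${*}^{k}1^{v_1}\cdots t^{v_t}$, so a symmetric coloring has a monochromatic $1$-fold line iff it has a monochromatic line at all. You then take finiteness from Hales--Jewett itself. This is precisely the content of the paper's later Proposition~\ref{prop:sym-blind}, which you have in effect anticipated and used to settle the sum case.

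The paper instead treats the sum-type case directly. By Lemma~\ref{lem:1d} a sum-type coloring sees a line only through the gap $k$ of its weight progression, and a single interval realizes every gap $k\le n$. So avoiding monochromatic $1$-fold lines would require a palette with no monochromatic $t$-term progression in $[n,tn]$. Van der Waerden forbids this once $(t-1)n+1\ge W(t,r)$.

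The two routes buy different things.
\begin{itemize}
\item The paper's route needs only van der Waerden, not the symmetric reduction. It also gives the explicit bound $\HJ^{(1)}_{\mathrm{sum}}(t,r)\le\lceil(W(t,r)-1)/(t-1)\rceil$ on the spot.
\item Your route gives $\lambda_{\mathrm{sym}}=\lambda_{\mathrm{ow}}=\lambda_{\mathrm{sum}}=0$ in one stroke. More strongly, it gives $\HJ^{(1)}_{\mathcal C}=\HJ_{\mathcal C}$ for every subclass $\mathcal C$ of the symmetric colorings. Combined with Corollary~\ref{cor:arith}, this recovers the same explicit sum-type bound.
\end{itemize}
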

 
\begin{proof}
A set of at most $q$ coordinates is a union of at most $q$ intervals, so
$L^{[q]}\subseteq L^{(q)}$ and an $L^{(q)}$-line-free coloring is
$L^{[q]}$-line-free, giving the first claim. For the second: a sum-type
coloring sees a line only through the gap $k$ of its weight progression
(Lemma~\ref{lem:1d}), and a single interval realizes every $k\le n$; so a
sum-type coloring of $[t]^n$ with no monochromatic $1$-fold line would need
a palette avoiding monochromatic $t$-term progressions of \emph{every} gap
$\le n$ on $[n,tn]$, impossible for $(t-1)n+1\ge W(t,r)$ by van der
Waerden~\cite{vanderWaerden1927}.
\end{proof}
 
The contrast with Section~\ref{sec:bracket} is exact: a bracket caps the gap
at a fixed $K$, turning the sum-type problem into bounded-gap van der Waerden
with $\ksum\ge(t-1)r-1$; a single long interval is an unbounded gap, so
ordinary van der Waerden applies and $\lambda_{\mathrm{sum}}=0<\ksum$. In
fact the entire machinery of this note is blind to the interval axis.
 
\begin{proposition}[Symmetric colorings are interval-blind]\label{prop:sym-blind}
$\HJ^{(q)}_{\mathrm{sym}}(t,r)=\HJ_{\mathrm{sym}}(t,r)$ for every $q\ge1$;
in particular $\lambda_{\mathrm{sym}}=\lambda_{\mathrm{ow}}=
\lambda_{\mathrm{sum}}=0$, and the colorings witnessing
$\lambda(t,r)\ge1$ are necessarily asymmetric in every dimension
$n\ge\HJ_{\mathrm{sym}}(t,r)$.
\end{proposition}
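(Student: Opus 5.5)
The plan is to show that for a symmetric coloring the location of the active set is invisible, so every line can be replaced by a $1$-fold line with the same colors. By Lemma~\ref{lem:reduction}, whether $L_\tau$ is monochromatic under $c=\bar c\circ\type$ depends only on the pair $(k,v)$, where $k=|I(\tau)|$ and $v$ is the inactive type: the line is monochromatic iff $C_{k,v}$ is monochromatic under $\bar c$. In particular it does not depend on which coordinates are active.

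The key step is realizability. For every pair $(k,v)$ with $1\le k\le n$ and $v\in T^{(t)}_{n-k}$, the root ${*}^{k}1^{v_1}\cdots t^{v_t}$ from the proof of Lemma~\ref{lem:reduction} has active set $\{1,\dots,k\}$, which is a single interval. Hence every corner tuple is realized by a line in $L^{(1)}([t]^n)\subseteq L^{(q)}([t]^n)$. We also have $L^{(q)}\subseteq L^{[n]}$, the set of all lines.

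From this, for a symmetric $c$ on $[t]^n$ the following are equivalent: $c$ has no monochromatic $q$-fold line; no corner tuple is monochromatic under $\bar c$; $c$ is line-free. Quantifying over the symmetric colorings of each fixed $n$ then gives $\HJ^{(q)}_{\mathrm{sym}}(t,r)=\HJ_{\mathrm{sym}}(t,r)$ for every $q\ge1$. This value is finite because $\HJ_{\mathrm{sym}}\le\HJ(t,r)<\infty$. Consequently $\HJ^{(1)}_{\mathrm{sym}}<\infty$, so $\lambda_{\mathrm{sym}}=0$.

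Since the classes are nested, $\mathrm{sum}\subseteq\mathrm{ow}\subseteq\mathrm{sym}$, the ceilings are ordered $\lambda_{\mathrm{sum}}\le\lambda_{\mathrm{ow}}\le\lambda_{\mathrm{sym}}$, exactly as in \eqref{eq:ceiling-chain}. All three are therefore $0$, consistent with Proposition~\ref{prop:interval-vs-bracket}. Alternatively, Theorem~\ref{thm:radix} gives $\mathrm{ow}=\mathrm{sym}$ directly.

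For the final clause, suppose $\lambda(t,r)\ge1$ and let $c$ be a coloring of $[t]^n$ with no monochromatic $1$-fold line, where $n\ge\HJ_{\mathrm{sym}}(t,r)$. If $c$ were symmetric, the equivalence above would make it line-free. But by definition of $\HJ_{\mathrm{sym}}$, every symmetric coloring in such a dimension has a monochromatic line. So $c$ is asymmetric.

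There is no real obstacle here. The only point needing care is that "union of at most $q$ intervals" includes a single interval, so $q\ge1$ suffices. The rest is bookkeeping with the class-order monotonicity of the ceilings.
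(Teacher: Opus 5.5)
Your proof is correct and is the paper's argument: under a symmetric coloring, monochromaticity of a line depends only on $(k,v)$, and the root ${*}^{k}1^{v_1}\cdots t^{v_t}$ realizes every $(k,v)$ with the single interval $[1,k]$ as active set, so $1$-fold line-freeness and line-freeness coincide for symmetric colorings. Your bookkeeping for the ceilings and for the asymmetry clause is more explicit than the paper's. The one tacit step is that your ``by definition'' for $n>\HJ_{\mathrm{sym}}$ needs the symmetric threshold to be upward closed in $n$; this holds because restricting a symmetric coloring of $[t]^{n+1}$ to the words ending in a fixed letter gives a symmetric coloring of $[t]^n$, and its monochromatic lines lift.
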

 
\begin{proof}
For a symmetric coloring, monochromaticity of a line depends only on its
invariant $(k,v)$ (Lemma~\ref{lem:reduction}), and every $(k,v)$ is realized
by the root ${*}^{k}1^{v_1}\cdots t^{v_t}$, whose active set $[1,k]$ is a
single interval. Hence a symmetric coloring has a monochromatic $1$-fold
line iff it has a monochromatic line at all. (This is implicit
in~\cite{thesis}.)
\end{proof}
 
What is known about $\lambda$ is confined to $t\le3$ and exhibits a parity
phenomenon with no bracket counterpart.
 
\begin{proposition}\label{prop:small-interval}
$\HJ^{(q)}(2,r)=\HJ(2,r)=r$ for every $q\ge1$; in particular
$\lambda(2,r)=0$.
\end{proposition}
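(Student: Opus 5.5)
We need to show $\HJ^{(q)}(2,r)=\HJ(2,r)=r$ for every $q\ge1$, which also settles $\HJ(2,r)=r$ as used in the introduction. The plan is a sandwich. Since $L^{(1)}\subseteq L^{(q)}\subseteq$ all lines, we have $\HJ(2,r)\le\HJ^{(q)}(2,r)\le\HJ^{(1)}(2,r)$. So it suffices to prove two things: $\HJ^{(1)}(2,r)\le r$, by finding a monochromatic $1$-fold line in every $r$-coloring of $[2]^r$; and $\HJ(2,r)\ge r$, by exhibiting a line-free $r$-coloring of $[2]^{r-1}$. Then $\lambda(2,r)=0$ follows, because $\HJ^{(1)}(2,r)<\infty$.

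\emph{Upper bound.} In $[2]^n$ with $n=r$, consider the $r+1$ words $u_j=2^{j}1^{n-j}$ for $j=0,\dots,n$. By pigeonhole, two of them, say $u_i$ and $u_j$ with $i<j$, receive the same color. The root $\tau=2^{i}{*}^{\,j-i}1^{n-j}$ has $\tau(1)=u_i$ and $\tau(2)=u_j$. Its active set is the single interval $[i+1,j]$, so $L_\tau$ is a monochromatic $1$-fold line. This mirrors the fact $G_r(\{0,1\})=r+1$ from Table~\ref{tab:gallai}, read along a chain in the cube.

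\emph{Lower bound.} On $[2]^{n}$ with $n=r-1$, take the sum-type coloring $c(w)=\sigma(w)-n\in\{0,\dots,n\}$, which uses $n+1=r$ colors. A line with $k\ge1$ active coordinates has its two words at weights $S+k$ and $S+2k$ (Lemma~\ref{lem:1d}). These differ, so no line is monochromatic, and hence $\HJ(2,r)>r-1$. Equivalently, this is the closed form of Theorem~\ref{thm:gallai-bound} with $S=\{0,1\}$, which gives $\lceil r/1\rceil=r$.

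No step is a real obstacle. The only point needing care is that the pigeonhole line in the upper bound really is $1$-fold, and taking the chain of prefix words $2^j1^{n-j}$ guarantees a contiguous active set. The remaining work is checking the edge cases $r=1$ and $n=0$, where the conventions are trivial.
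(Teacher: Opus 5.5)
Your proof is correct and matches the paper's argument: you use the sandwich $\HJ\le\HJ^{(q)}\le\HJ^{(1)}$ and pigeonhole on the $r+1$ words of the chain $2^j1^{r-j}$ (the paper uses the mirror chain $1^j2^{r-j}$), whose pairs are joined by lines with interval active sets, together with the sum-type lower bound on $[2]^{r-1}$. Your coloring $\sigma(w)-n$ is the paper's $\sigma\bmod r$ up to relabeling, since on $[2]^{r-1}$ the weight takes exactly the $r$ consecutive values $r-1,\dots,2r-2$.
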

 
\begin{proof}
$\HJ^{(1)}\ge\HJ$ since $L^{(1)}\subseteq L$. For the upper bound, in
$[2]^r$ the chain $w_0<\dots<w_r$, where $w_j=1^j2^{r-j}$, is pairwise
joined by lines with active sets the intervals $\{a{+}1,\dots,b\}$; among
$r+1$ words some two share a color, giving a monochromatic $1$-fold line;
the general case follows from $\HJ\le\HJ^{(q)}\le\HJ^{(1)}$.
(This also proves $\HJ(2,r)=r$, the lower bound being the sum-type coloring
$\sigma\bmod r$ on $[2]^{r-1}$; equivalently, line-free classes are
antichains and Mirsky's theorem~\cite{Mirsky1971} applies.)
\end{proof}
 
\begin{proposition}[Conlon--Kam\v{c}ev~\cite{conlonAndKamcev}]\label{prop:ck}
$\HJ^{(r-1)}(3,r)=\infty$ for odd $r$; equivalently $\lambda(3,r)\ge r-1$.
\end{proposition}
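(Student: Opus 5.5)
The statement is cited from Conlon and Kam\v{c}ev~\cite{conlonAndKamcev}, so formally it suffices to cite it. A self-contained proof would proceed as follows. By Proposition~\ref{prop:sym-blind} the witnessing colorings must be asymmetric, so no weight from Section~\ref{sec:ow} can work. I would instead color a word by a \emph{local boundary functional}. Fix a table $\varphi\colon[t]\times[t]\to\Z_r$ with $\varphi(a,a)=0$, and optionally a letter weight $\omega$. Put
\[
\Phi(w)=\sum_i\omega_{w_i}+\sum_{i=1}^{n-1}\varphi(w_i,w_{i+1})\pmod r,
\]
padding $w$ with a fixed sentinel letter at both ends so that boundary effects are uniform, and color $w$ by $\Phi(w)$. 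The aim is to choose $\omega$ and $\varphi$ so that, for every root $\tau$ whose active set is a union of $q\le r-1$ intervals, the three values $\Phi(\tau(1)),\Phi(\tau(2)),\Phi(\tau(3))$ are not all equal. Since $\Phi$ is periodic in nothing but $r$, it defines a coloring in every dimension, which is what $\HJ^{(r-1)}(3,r)=\infty$ requires.

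The first step is bookkeeping. Write the active set as $q$ maximal intervals $J_1,\dots,J_q$ of lengths $\ell_1,\dots,\ell_q$. For each $J_s$, let $x_s$ and $y_s$ be the fixed inactive letters immediately to its left and right (or the sentinel). Only terms touching the active set change with $a$. This gives
\[
\Phi(\tau(a))=\mathrm{const}+\omega_a\textstyle\sum_s\ell_s+\sum_{s=1}^q\bigl(\varphi(x_s,a)+\varphi(a,y_s)\bigr),
\]
using $\varphi(a,a)=0$ on interval interiors. Consequently, for $a\neq b$,
\[
\Phi(\tau(a))-\Phi(\tau(b))=(\omega_a-\omega_b)\,k+\sum_{s=1}^q\delta_{ab}(x_s,y_s),
\]
where $k=\sum_s\ell_s$ and $\delta_{ab}(x,y)=\varphi(x,a)+\varphi(a,y)-\varphi(x,b)-\varphi(b,y)$. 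The line is monochromatic iff both differences $a,b\in\{1,2\},\{2,3\}$ vanish mod $r$, so everything reduces to a finite combinatorial condition on the table $\delta$ and the multiset of boundary pairs $(x_s,y_s)$.

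The key step is choosing $\varphi$ and $\omega$ so that the two-difference system has no solution when $q\le r-1$. The natural choice is to take $\omega$ constant, killing the $k$-term, and to make each interval contribute a \emph{nonzero multiple of a common unit}. Concretely, I would look for a $\varphi$ such that, for each interval, some fixed combination $\alpha\bigl(\Phi(\tau(1))-\Phi(\tau(2))\bigr)+\beta\bigl(\Phi(\tau(3))-\Phi(\tau(2))\bigr)$ of the two differences gets a contribution equal to $2$ (or $\pm2$ with all signs forced equal), whatever $x_s,y_s$ are. The total is then $2q\not\equiv0\pmod r$, because $r$ is odd and $1\le q\le r-1$, so $2q$ is not a multiple of $r$; this exploits the invertibility of $2$ mod $r$ and explains both hypotheses. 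A first candidate is $\varphi(x,y)=\mathbf 1[x\ne y]$ combined with a signed transition count between the letters $1$ and $3$. The check against the nine boundary pairs $(x_s,y_s)\in[3]^2$ is a finite table verification.

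The main obstacle is exactly this choice of $\varphi$. Naive transition counts fail: with $x=1$ and $y=3$, a single interval can produce equal contributions for all three substitutions. One must therefore find a combination in which the per-interval contribution is independent of the boundary letters and nonzero. If the direct search does not work, I would fall back on the actual Conlon--Kam\v{c}ev construction. It is also a finite-state coloring, read off by a small automaton scanning the word, and its verification is the same per-interval accounting followed by the parity argument $2q\not\equiv0\pmod r$.
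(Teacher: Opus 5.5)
The paper gives no argument of its own here; the proposition is simply quoted from Conlon--Kam\v{c}ev, so your opening sentence is exactly its route. Your self-contained sketch, however, has a genuine gap at the step you yourself flag: no table $\varphi$ is produced, and the candidate you name fails. The sum $c_{x,y}(a)+c_{y,x}(a)=\varphi(x,a)+\varphi(a,y)+\varphi(y,a)+\varphi(a,x)$ depends only on the symmetric part of $\varphi$, so any antisymmetric correction (such as your signed $1$--$3$ count) cancels between the boundary pairs $(1,2)$ and $(2,1)$. It also drops out of $c_{3,3}$. Hence for $\varphi=\mathbf 1[x\ne y]$ plus any signed correction, the three-interval root $1{*}22{*}13{*}3$ receives $(1,1,2)+(1,1,2)+(2,2,0)=(4,4,4)$ from its boundaries over $a=1,2,3$. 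It is therefore monochromatic for every $r$, which is fatal for all odd $r\ge5$. Your stated reason for discarding naive counts is also off: the pair $(1,3)$ gives $(1,2,1)$, which is not constant. Finally, the fallback to ``the actual Conlon--Kam\v{c}ev construction'' is just the citation again, so the sketch adds no proof.

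Your framework does close with a different table. Take $\omega=0$, sentinel $w_0=w_{n+1}=0$, and $\varphi(x,y)=(x-y)^2$. On the letters this is $\mathbf 1[x\ne y]$ plus three times the \emph{unsigned} count of $1$--$3$ transitions. Use $\alpha=\beta=1$, i.e.\ the second difference $\Phi(\tau(1))-2\Phi(\tau(2))+\Phi(\tau(3))$. By your bookkeeping, the $s$-th interval contributes $(a-x_s)^2+(a-y_s)^2$. This is a sum of two monic quadratics in $a$, each with second difference $2$ (the same as that of $a^2$), whatever $x_s,y_s$ are, sentinel included. Hence the second difference of $\Phi$ along the line is exactly $4q$. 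A monochromatic line forces $4q\equiv0\pmod r$, hence $r\mid q$ for odd $r$. So no line whose active set is a union of at most $r-1$ intervals is monochromatic, in any dimension. This is a complete proof, and it locates the parity hypothesis: the per-interval contribution is $4$, which is invertible modulo $r$ exactly when $r$ is odd, consistent with Proposition~\ref{prop:ks}.
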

 
\begin{proposition}[{Leader--R\"aty ($r=2$)~\cite{leader2018noteintervalshalesjewetttheorem}; Kam\v{c}ev--Spiegel~\cite{kamcev2018noteintervalshalesjewetttheorem}}]\label{prop:ks}
$\HJ^{(r-1)}(3,r)<\infty$ for even $r$; equivalently $\lambda(3,r)\le r-2$.
\end{proposition}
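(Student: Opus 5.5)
Since this proposition is quoted from the literature, the plan is to reconstruct the argument of Kam\v{c}ev and Spiegel (which for $r=2$ specializes to Leader--R\"aty) rather than anything from the symmetric machinery. Proposition~\ref{prop:sym-blind} already shows why that machinery cannot help: a symmetric coloring is blind to the interval structure of the active set, so the forcing must use colorings' positional information. The overall strategy is to pass to a low-dimensional combinatorial subspace, where $q$-fold lines become a rigid geometric family, and then apply a parity (Sperner/Tucker-type) counting argument. That counting argument produces a monochromatic member whenever the number of colors is even.

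\emph{Step 1 (encoding by switch points).} I would restrict to words of $[3]^n$ that are piecewise constant with a bounded number of runs. Such a word is determined by its sequence of letters and an increasing tuple of switch positions $0\le x_1\le\dots\le x_m\le n$. The key observation is that moving a run boundary replaces an interval of one letter by another letter. Hence a root whose $*$-set consists of at most $r-1$ runs, each adjacent to prescribed letters, yields a $1$-fold, \dots, $(r-1)$-fold line. I would pick the letter pattern so that the three words of such a line are three lattice points of a simplex $\{x_1\le\dots\le x_m\}$ related by a common displacement. Concretely, $\tau(a)$ moves each of the $\le r-1$ active runs to letter $a$, so the line is a ``corner'' of the switch-point simplex.

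\emph{Step 2 (reduction to a simplex coloring problem).} Through this encoding, an $r$-coloring of $[3]^n$ induces an $r$-coloring of the lattice points of a dilated simplex of dimension $m=m(r)$. The problem is then to show that for large $n$ some corner with the admissible shape is monochromatic. I would choose $m$ so that the admissible corners are exactly the faces or edges of a triangulation with mesh $\to0$ as $n\to\infty$.

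\emph{Step 3 (parity argument, the main obstacle).} This is the step I expect to be hardest. I would try a Sperner/Ky Fan-type count: assume no admissible corner is monochromatic. Then define a labeling of the triangulated simplex, or of a sphere obtained by antipodally identifying the two boundary letter patterns, by the $r$ colors together with an orientation bit. Counting fully labeled cells modulo $2$ via the boundary recursion, the boundary contribution should come out to $r/2 \cdot(\text{odd})$ or, after the antipodal symmetry, to a Tucker-type obstruction that is nonzero exactly when $r$ is even.

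The evenness of $r$ must enter precisely here. This is consistent with Proposition~\ref{prop:ck}: for odd $r$, Conlon--Kam\v{c}ev's coloring shows the count can vanish. The first thing I would test is the case $r=2$, with $1$-fold lines in $[3]^n$. There the simplex is a segment of words $1^a2^b3^c$ and its modifications, and the parity argument should reduce to the Leader--R\"aty observation that the color changes along a path cannot all be absorbed. I would then try to lift the same bookkeeping to $r-1$ independent switch points.

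Finiteness of $\HJ^{(r-1)}(3,r)$ then follows, with $n$ at least the size needed for the triangulation in Step 2. The equivalence with $\lambda(3,r)\le r-2$ is immediate from the definition of $\lambda$.
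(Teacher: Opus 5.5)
The paper gives no proof of this proposition. It quotes the result from Leader--R\"aty ($r=2$) and Kam\v{c}ev--Spiegel (general even $r$), so your reconstruction has to carry the whole argument. As written it does not: there is a genuine gap at Step~3.

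Step~3 is the only place where the parity of $r$ can enter, and it describes what a proof might look like rather than giving one. The labeling by ``the $r$ colors together with an orientation bit'', the triangulation and the antipodal identification are never defined. The intended mechanism also does not match the goal. Sperner, Ky Fan and Tucker-type counts need a boundary condition or an antipodally antisymmetric labeling, and an arbitrary $r$-coloring supplies neither. What they produce is a fully labeled or complementary cell, not a monochromatic one. Converting the hypothesis ``no admissible corner is monochromatic'' into a labeling whose forced cells contradict it is the entire content of the theorem, and you do not attempt it. Even your guessed boundary count $\tfrac r2\cdot(\text{odd})$ is odd only when $r\equiv2\pmod 4$, so taken at face value the argument would already fail at $r=4$.

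Steps~1--2 are not established either. For $\tau(1),\tau(2),\tau(3)$ all to stay in a family with a fixed run pattern, each active run must sit between prescribed letters. The admissible corners then form a thin family that need not force anything. Take the pattern $1,2,3$, i.e.\ the words $1^{s_1}2^{s_2-s_1}3^{n-s_2}$. The only lines inside this family are the triples $\{(x,x),(x,x+k),(x+k,x+k)\}$ in switch-point coordinates. Color $(x,x)$ by $x\bmod 2$, and each $(x,y)$ with $x<y$, $x\equiv y\pmod 2$, by $1-(x\bmod 2)$; then none of these triples is monochromatic, in any dimension. Corners of all scales $k$ are also not the cells of any triangulation. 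So the choice of words and lines, which you leave open, is exactly where the difficulty lies.

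A rigorous substitute for Step~1 does exist. A Ramsey argument on the switch positions lets you assume the color depends only on the reduced letter pattern. This turns the problem into a finite one about words over $\{1,2,3\}$ with no two equal adjacent letters, and roots with at most $r-1$ stars. That finite forcing statement for even $r$, however, is equivalent to the proposition itself and still has to be proved.
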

 
These two results, Shelah's proof, and one trivial observation determine
the ceiling completely.
 
\begin{proposition}[$\lambda(3,\cdot)$ is determined]\label{prop:lambda3}
$\lambda(3,r)=r-1$ for odd $r$ and $\lambda(3,r)=r-2$ for even $r$; in
particular $\lambda(3,3)=2$ and $\lambda(3,2)=0$.
\end{proposition}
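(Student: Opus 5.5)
The plan is to assemble the value from the two cited results, plus one observation that supplies the missing direction for odd $r$. Recall that $\lambda(3,r)=\max\{q\ge0:\HJ^{(q)}(3,r)=\infty\}$, and that $\HJ^{(q)}$ is nonincreasing in $q$, because $L^{(q)}\subseteq L^{(q')}$ for $q\le q'$. So the ceiling is pinned down once we know the largest $q$ with $\HJ^{(q)}=\infty$ and that $\HJ^{(q+1)}<\infty$.

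\emph{Even $r$.} Proposition~\ref{prop:ks} gives $\HJ^{(r-1)}(3,r)<\infty$, so by monotonicity $\lambda(3,r)\le r-2$. For the lower bound we need $\HJ^{(r-2)}(3,r)=\infty$. Here $r-1$ is odd, and Proposition~\ref{prop:ck} gives a coloring with $r-1$ colors and no monochromatic $(r-2)$-fold line in any dimension. Viewed as an $r$-coloring with one color unused, this shows $\HJ^{(r-2)}(3,r)\ge\HJ^{(r-2)}(3,r-1)=\infty$. At $r=2$ the bound reads $\lambda\ge0$, which is trivial. This gives $\lambda(3,r)=r-2$.

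\emph{Odd $r$.} Proposition~\ref{prop:ck} gives $\lambda(3,r)\ge r-1$. For the matching upper bound we need $\HJ^{(r)}(3,r)<\infty$, and this is where Shelah's proof enters. As recalled above, it produces monochromatic $q$-fold lines with $q\le\HJ(t-1,r)$. At $t=3$ this is $q\le\HJ(2,r)=r$ by Proposition~\ref{prop:small-interval}. Hence every $r$-coloring of $[3]^n$, for $n$ at least Shelah's bound, contains a monochromatic line whose active set is a union of at most $r$ intervals. So $\HJ^{(r)}(3,r)<\infty$ and $\lambda(3,r)\le r-1$.

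This step is the main obstacle. I would verify it by tracing Shelah's insertion argument: one applies the Shelah cube lemma to blocks of coordinates, and the active set of the resulting line is one interval per block on which the induction on $t-1$ acts. The number of such blocks is governed by the dimension $\HJ(t-1,r)$ of the lower-level cube. Whether this count is exactly $\HJ(t-1,r)$ rather than some larger quantity is the point to check carefully; the paper's phrasing indicates that the count is $\HJ(t-1,r)$. Finally, the specializations $\lambda(3,3)=2$ and $\lambda(3,2)=0$ follow by substituting $r=3$ and $r=2$, the latter agreeing with the Leader--R\"aty case.
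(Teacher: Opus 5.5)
Your proposal is correct and follows the paper's proof essentially step for step: the upper bound $\lambda(3,r)\le r-1$ comes from Shelah's proof (at most $\HJ(2,r)=r$ intervals), Kam\v{c}ev--Spiegel sharpens it to $r-2$ for even $r$, and the lower bounds come from Conlon--Kam\v{c}ev together with monotonicity of $\lambda(3,r)$ in $r$, with $r=2$ vacuous. The Shelah interval count you flag as the point to check is exactly what the paper takes as a citation to Shelah and Conlon--Kam\v{c}ev, so your reliance on it matches the paper.
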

 
\begin{proof}
Shelah's proof yields, in every sufficiently high dimension, a
monochromatic line whose active set is a union of at most $r$
intervals~\cite{s.proof_of_HJ,conlonAndKamcev}, so $\lambda(3,r)\le r-1$;
for even $r$ Proposition~\ref{prop:ks} improves this to $r-2$. For the
lower bounds, Proposition~\ref{prop:ck} gives $\lambda(3,r)\ge r-1$ for
odd $r$; $\lambda(3,r)$ is nondecreasing in $r$, an $(r-1)$-coloring
avoiding monochromatic $q$-fold lines being such an $r$-coloring, so for
even $r\ge4$, $\lambda(3,r)\ge\lambda(3,r-1)=r-2$; and at $r=2$ the lower
bound is vacuous.
\end{proof}
 
So $\lambda(3,\cdot)$ is settled and jumps with the parity of $r$ --- a
feature absent from $\kappa$, whose van der Waerden origin is parity-blind
--- and by Proposition~\ref{prop:sym-blind} the witnesses behind
Proposition~\ref{prop:ck} live strictly off the simplex. What remains at
$t=3$ is quantitative: the finite interval numbers themselves. Finiteness
of the smallest, $\HJ^{(1)}(3,2)$, is Leader--R\"aty's
theorem~\cite{leader2018noteintervalshalesjewetttheorem}, disproving the
conjecture of~\cite{conlonAndKamcev} that it is infinite; the exact value
is new.
 
\begin{theorem}[Computer-assisted]\label{thm:hj13}
$\HJ^{(1)}(3)=5$, whereas $\HJ(3)=4$.
\end{theorem}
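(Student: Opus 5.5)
The claim $\HJ^{(1)}(3)=5$ splits into a lower bound $\HJ^{(1)}(3)\ge5$ and an upper bound $\HJ^{(1)}(3)\le5$. The comparison $\HJ(3)=4$ is Hindman--Tressler~\cite{firstnontrivialHJ_is_4}.

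\emph{Lower bound.} We exhibit an explicit $2$-coloring of $[3]^4$ with no monochromatic $1$-fold line, that is, no monochromatic line whose active set is a nonempty interval $[i,j]\subseteq[4]$. By Proposition~\ref{prop:sym-blind} such a coloring cannot be symmetric, since $4\ge\HJ_{\mathrm{sym}}(3,2)$, so the search must leave the simplex. The instance is tiny: $81$ cells, and the roots with interval active set number $\sum_{1\le i\le j\le4}3^{4-(j-i+1)}=4\cdot27+3\cdot9+2\cdot3+1=142$. A SAT solver, or even a backtracking search seeded by the $1644$ line-free colorings of $[3]^3$ from Theorem~\ref{thm:census} extended coordinatewise, should find a witness immediately. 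The witness is then re-verified by direct enumeration of the $142$ interval lines, in keeping with the paper's verification policy. Because $L^{(1)}([3]^4)\subsetneq L([3]^4)$ and $\HJ(3)=4$, a line-free coloring of $[3]^4$ in the full sense does not exist. This is exactly why the weaker interval restriction is needed, and why the witness necessarily contains monochromatic non-interval lines.

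\emph{Upper bound.} We show every $2$-coloring of $[3]^5$ has a monochromatic line with interval active set. The $243$ variables and $\sum_{\ell=1}^{5}(6-\ell)3^{5-\ell}=5\cdot81+4\cdot27+3\cdot9+2\cdot3+1=547$ interval lines give a $2$-uniform-hypergraph $2$-coloring CNF with $243$ variables and $1094$ clauses. Its unsatisfiability is to be certified by SAT. Symmetry breaking by the automorphisms preserving interval lines should help. These are the reversal $i\mapsto6-i$, the letter permutations of $[3]$ acting simultaneously on all coordinates, and the color swap. Fixing $\chi(11111)=0$, for example, cuts the search substantially. We would run at least two independent solvers and ideally emit a DRAT proof checked by a verified checker, since forcing claims cannot be certified by enumeration here.

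\emph{Main obstacle.} The upper bound is the hard part. It is an unsatisfiability claim, so its trustworthiness rests entirely on solver correctness and proof checking. If direct solving stalls, I would first try cube-and-conquer on the colors of the central slice $\{w: w_3=\cdot\}$. Failing that, I would reduce via Leader--R\"aty's structural argument to a smaller forced subconfiguration.
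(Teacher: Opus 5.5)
Your proposal follows the paper's proof. For the lower bound, a satisfiable SAT instance on the $81$ words and $142$ interval lines of $[3]^4$, with the witness re-checked by direct enumeration. For the upper bound, an unsatisfiable instance on $[3]^5$ with its $547$ interval lines and two clauses per line, and $\HJ(3)=4$ quoted from Hindman--Tressler.

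Two small slips. The line hypergraph is $3$-uniform, not $2$-uniform. Also, seeding the search with the line-free colorings of $[3]^3$ restricts it more than necessary: the slices $w\mapsto c(wa)$ of a witness need only avoid the $34$ interval lines of $[3]^3$, not all $37$ lines.
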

 
\begin{proof}
$L^{(1)}([3]^4)$ has $142$ lines. A SAT instance with one variable per word
and two clauses per line is satisfiable; its witness --- an explicit
$2$-coloring of the $81$ words, recorded in~\cite{thesis} --- was re-checked
for this note against all $142$ lines by direct enumeration from the
definition, with zero monochromatic lines, giving $\HJ^{(1)}(3)\ge5$. The
same encoding in dimension $5$ ($547$ lines; the search ranges over all
$2^{3^5}$ colorings, with no symmetry imposed) is unsatisfiable, so
$\HJ^{(1)}(3)\le5$. Deletion minimization extracts a minimal forcing
subfamily of $262$ lines on $172$ of the $243$ words, re-verified
independently~\cite{thesis}.
\end{proof}
 
\begin{remark}\label{rem:no-core}
The obstruction has no small core: unlike $\HJ^{(1)}(2,3)=3$, forced by the
four-word chain of Proposition~\ref{prop:small-interval}, every minimal
forcing subfamily found in dimension $5$ numbers in the hundreds of lines
--- consistent with Proposition~\ref{prop:sym-blind}, which places the
interval ceiling beyond all weight constructions. Note also that the naive
interval analogue of Conjecture~\ref{conj:collapse} fails outright: it would
force $\HJ^{(1)}(2,3)=\infty$ and $\HJ^{(1)}(3)=\infty$, against
Proposition~\ref{prop:small-interval} and Theorem~\ref{thm:hj13}.
\end{remark}
 
\section{Conjectures and Open Problems}\label{sec:open}
 
\subsection{Three conjectures}
 
Every line of $[t]^{\HJ}$ lies in $L^{[\HJ]}$, so
$\HJ^{[\HJ(t,r)]}(t,r)=\HJ(t,r)$ trivially. The first conjecture asserts
that the bracket hierarchy collapses only at that last step.
 
\begin{conjecture}[Collapse]\label{conj:collapse}
$\HJ^{[\HJ(t,r)-1]}(t,r)=\infty$ for all $t,r\ge2$; equivalently
$\kall(t,r)=\HJ(t,r)-1$.
\end{conjecture}
 
Collapse holds wherever $\HJ$ is known: at every $(2,r)$ and at $(3,2)$,
where already $\ksum=\HJ-1$
(Proposition~\ref{prop:kstar}, Theorem~\ref{thm:block}). A coloring whose
only monochromatic line is the diagonal $L_{*\cdots*}$ is
$L^{[\HJ-1]}$-line-free at $n=\HJ$, so the next conjecture implies
$\HJ^{[\HJ-1]}(t,r)>\HJ(t,r)$, the first nontrivial instance of Collapse
(for $n\le\HJ-1$ the required witnesses are simply line-free colorings).
At $(3,2)$, diagonal-only colorings exist for every
$n\le4=\HJ$~\cite{thesis}.
 
\begin{conjecture}[Diagonal-only]\label{conj:diag}
For every $n\le\HJ(t,r)$ some $r$-coloring of $[t]^n$ has the diagonal as
its only monochromatic line.
\end{conjecture}
 
The range of $n$ is the largest possible.
 
\begin{proposition}\label{prop:diag-max}
For $n>\HJ(t,r)$ no $r$-coloring of $[t]^n$ has the diagonal as its only
monochromatic line.
\end{proposition}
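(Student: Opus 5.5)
The plan is to fix an $r$-coloring $c$ of $[t]^n$ with $n>\HJ(t,r)$ and produce a monochromatic line other than the diagonal $L_{*\cdots*}$. The tool is the standard restriction to subcubes. Set $N=\HJ(t,r)$, so that $n\ge N+1$. Choose any set $J\subseteq[n]$ with $|J|=N$ and fix the coordinates outside $J$ to constants. The resulting embedded copy of $[t]^N$ inherits an $r$-coloring. By the definition of $\HJ(t,r)$, this copy contains a monochromatic combinatorial line. That line is a line of $[t]^n$ whose root has the frozen letters off $J$, so its active set lies inside $J$.

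The only point that needs care is that this line is not the diagonal of $[t]^n$. The diagonal has active set $[n]$, whereas the line just found has active set contained in $J$, a proper subset because $|J|=N<n$. In fact one frozen coordinate already suffices: since $n\ge N+1$ and $N\ge1$, I can take $J=\{1,\dots,N\}$, freeze the remaining $n-N\ge1$ coordinates to the letter $1$, and then the line obtained has some inactive coordinate, so it differs from $L_{*\cdots*}$. Any coloring of $[t]^n$ therefore has a monochromatic non-diagonal line, which is the claim.

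There is no real obstacle. The one thing to check is the strict inequality $n>\HJ(t,r)$, which is exactly what allows freezing at least one coordinate. For contrast, I would note that at $n=\HJ(t,r)$ this argument gives nothing, because the only subcube of full dimension is the whole cube and the forced line may be the diagonal itself. This is why the range in Conjecture~\ref{conj:diag} cannot be extended.
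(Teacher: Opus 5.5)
Your proof is correct and is essentially the paper's argument: the paper freezes only the last coordinate to a letter $a$ and applies the definition of $\HJ(t,r)$ to the induced coloring $c_a(w)=c(wa)$ of $[t]^{n-1}$, producing a monochromatic line whose active set misses coordinate $n$. Freezing $n-N$ coordinates to land in dimension exactly $N=\HJ(t,r)$ is the same idea, and it has the minor advantage of not needing the (trivial) fact that every dimension $\ge\HJ(t,r)$, not just $\HJ(t,r)$ itself, forces a monochromatic line.
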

 
\begin{proof}
Fix such a coloring $c$ and a letter $a$, and color $[t]^{n-1}$ by
$c_a(w)=c(wa)$. Since $n-1\ge\HJ(t,r)$, some line $L_{\tau'}$ is
monochromatic under $c_a$; then $\tau'a$ is a root of $[t]^n$ whose active
set avoids coordinate $n$, and $L_{\tau'a}$ is a nondiagonal monochromatic
line under $c$.
\end{proof}
 
Finally, the hierarchy of Section~\ref{sec:ow} gives
$\HJ_{\mathrm{sym}}\le\HJ$, yet the two coincide wherever $\HJ$ is known,
and the record witnesses of Section~\ref{sec:records} are symmetric.
 
\begin{conjecture}[Symmetric colorings are extremal]\label{conj:sym}
$\HJ_{\mathrm{sym}}(t,r)=\HJ(t,r)$ for all $t,r\ge2$.
\end{conjecture}
 
\begin{remark}[Evidence and a caution]\label{rem:sym-evidence}
The census (Table~\ref{tab:census}) is what makes
Conjecture~\ref{conj:sym} a strong claim: symmetric line-free colorings are
scarce ($36$ of $1644$ already at the critical dimension of $(3,2)$), and
the conjecture asks this thin family to survive at every dimension up to
$\HJ-1$. The analogous hope has already failed in the \emph{density}
setting: the Polymath hyper-optimistic
conjecture~\cite{PolymathWikiHOC,PolymathDHJMoser} --- that the densest
line-free subset of $[3]^n$ in equal-slices measure is a union of type
slices, reducing density Hales--Jewett~\cite{furstenbergKatznelson1991,
polymath2012} to Fujimura's corner problem --- is false in higher
dimensions, where asymmetric sets are denser. That collapse is in the
density regime, not the coloring one (for the coloring side of the Polymath
project see~\cite{PolymathWikiColoring}), so it is a caution rather than a
counterexample; the rainbow dual of Remark~\ref{rem:rainbow} shows the same
asymmetry advantage in a coloring regime.
\end{remark}
 
\begin{remark}[A refuted Collapse need not be fatal: the offset]\label{rem:offset}
Either way the bracket ceiling is informative. Write
$\ksym(t,r)=\HJ(t,r)-\phi(t,r)$ with $\phi\ge1$. The assertion
$\phi\equiv1$ is the common strengthening of
Conjectures~\ref{conj:collapse} and~\ref{conj:sym}: it implies both, since
at $n=\HJ-1$ the bracket $[\HJ-1]$ contains every line and $\ksym=\HJ-1$
then forces a symmetric line-free coloring of $[t]^{\HJ-1}$; the converse
is unclear. But \emph{any} explicit upper bound on $\phi$ serves as well. Bounding
$\HJ(t,r)$ from above is a forcing statement over all $t^n$ points of the
cube --- an unsatisfiability certificate out of computational reach.
Bounding $\ksym(t,r)$ from above is the same forcing restricted to
bounded-scale corner tuples on the $O(n^{t-1})$ cells of the simplex
(Lemma~\ref{lem:reduction}) --- within reach. A known offset transports the
second into the first, $\HJ\le\ksym+\phi$, so the inaccessible upper bound
would follow from the accessible one.
\end{remark}
 
\subsection{Optimal weights and the limits of the method}
 
\begin{lemma}[Even orbits are palindromic]\label{lem:palindromy}
Let $S\subseteq\Z_m$ be an orbit of an affine map $\alpha(x)=ux+v$ of order
$2j$ with $u^j\equiv-1\pmod m$. Then $\alpha^{j}$ is a reflection
$x\mapsto-x+c$ fixing $S$ setwise; lifted to $\Z$ with $\min S=0$, this
reads $S=D-S$.
\end{lemma}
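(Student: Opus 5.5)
The plan is to split the statement into an algebraic part inside $\Z_m$ and a combinatorial part about lifting to $\Z$. For the algebraic part I iterate the affine map. By induction on $i$,
\[
\alpha^{i}(x)=u^{i}x+v\,(1+u+\dots+u^{i-1}).
\]
At $i=j$ the hypothesis $u^j\equiv-1\pmod m$ gives $\alpha^{j}(x)=-x+c$ with $c:=v(1+u+\dots+u^{j-1})\bmod m$, so $\alpha^j$ is a reflection of $\Z_m$. Since $S=\{\alpha^i(x_0):0\le i<2j\}$ is an $\alpha$-orbit, every power of $\alpha$ permutes it; in particular $\alpha^j(S)=S$, that is, $S\equiv c-S\pmod m$. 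This half is routine.

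The real content is the lift, and here I would be careful, because a set symmetric modulo $m$ need not be palindromic for an arbitrary choice of integer representatives. For example, $\{0,1,3\}\subset\Z_5$ is fixed by $x\mapsto1-x$, but $3-\{0,1,3\}\ne\{0,1,3\}$ in $\Z$. I would therefore read ``lifted to $\Z$ with $\min S=0$'' as follows. Choose a cut point on the circle $\Z_m$ between two cyclically consecutive elements of $S$, translate so the element just after the cut is $0$, and take representatives in $[0,m)$. This is the natural normalization: homothets of $S$ modulo $m$ are unchanged by translation, by Remark~\ref{rem:affine}.

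With that reading, I would encode $S$ by its cyclic sequence of gaps $(g_1,\dots,g_{|S|})$, the differences between consecutive elements, summing to $m$. A reflection $x\mapsto c-x$ fixing $S$ reverses this cyclic sequence, and so acts on the $|S|$-cycle of elements and gaps as a reflection of a polygon. If this polygon reflection fixes some gap $g_\ell$ (maps it to itself), I cut the circle inside $g_\ell$. The gap word read linearly from $0$ is then a palindrome, which is exactly $S=D-S$ with $D=\max S$. Concretely, the lifted reflection $x\mapsto D-x$ agrees with $\alpha^j$ modulo $m$ on $S$ and preserves $[0,D]$.

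The main obstacle is guaranteeing that such a fixed gap exists. When $|S|=2j$ is odd this is impossible, since $2j$ is even, so it is not the issue. Since $|S|=2j$, I would instead use the orbit structure directly. The map $\alpha^j$ sends $\alpha^i(x_0)$ to $\alpha^{i+j}(x_0)$, so it is fixed-point-free on $S$. A polygon reflection of an even cycle with no fixed vertex fixes exactly two opposite edges. Hence $\alpha^j$ fixes two gaps, and cutting at either one gives the palindromic lift.

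If the intended reading of the statement is a specific lift rather than a suitably chosen cut, I would add a remark that the cut must be taken at one of these two fixed gaps. That choice is exactly what makes the conclusion $S=D-S$ valid.
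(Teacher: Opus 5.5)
Your algebraic half is the paper's entire proof. You iterate to get $\alpha^{j}(x)=u^{j}x+v(1+u+\dots+u^{j-1})=-x+c$ and observe that a power of $\alpha$ preserves each $\alpha$-orbit. The paper stops there and treats ``lifted to $\Z$ with $\min S=0$, this reads $S=D-S$'' as automatic. Your gap-sequence argument is what actually justifies that sentence: the reflection reverses the cyclic gap word, a fixed-point-free reflection of an even cycle fixes two opposite gaps, and cutting inside one of them makes the linear gap word a palindrome.

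Your caution is warranted for genuine even orbits, not only for your odd example in $\Z_5$. Conjugate the paper's own map $x\mapsto5x+3$ on $\Z_{13}$ by a translation to get $\beta(x)=5x+11$, again of order $4$ with $5^2\equiv-1$. Its orbit of $0$ is $\{0,11,1,3\}$, and taking representatives in $[0,13)$ gives $\{0,1,3,11\}$, which differs from $11-\{0,1,3,11\}=\{0,8,10,11\}$. Here $\beta^2(x)=1-x$ fixes the gaps $(0,1)$ and $(3,11)$. Cutting inside them yields the palindromic lifts $\{0,2,10,12\}$ and $\{0,2,3,5\}$, exactly as you predict. So your route proves the correctly qualified statement: the lift is palindromic when the cut is taken inside a gap fixed by $\alpha^j$. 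The paper's one-line proof does not reach this. Its usage in Question~\ref{q:weights} is consistent only because the lifts chosen there happen to be cut at such a gap.

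One step needs a patch. The hypothesis is that $\alpha$ has order $2j$, not that $|S|=2j$, and the orbit size $d$ need only divide $2j$.
If $d\mid j$, then $\alpha^{j}$ is the identity on $S$, so every $x\in S$ solves $2x\equiv c\pmod m$. That congruence has at most two solutions, so $|S|\le2$, and any lift $\{0\}$ or $\{0,D\}$ is trivially palindromic.
If $d\nmid j$, then $d\mid 2j$ forces $d$ even and $j\equiv d/2\pmod d$. So $\alpha^{j}$ shifts orbit indices by $d/2$ and is fixed-point-free on $S$, and your polygon argument applies verbatim with $|S|=d$.
That $\alpha$ is a bijection, so the orbit is a genuine cycle, follows from $u\cdot(-u^{j-1})\equiv1$.
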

 
\begin{proof}
$\alpha^{j}(x)=u^{j}x+c=-x+c$ with $c=v(u^{j-1}+\dots+u+1)$, an involution
whose orbit-preservation is inherited from $\alpha$.
\end{proof}
 
\begin{question}[Best weights]\label{q:weights}
\emph{(i)} Which sets $S$ maximize the Gallai number per unit diameter,
$\lceil(G_r(S)-1)/D_S\rceil$, for given $(t,r)$? By Lemma~\ref{lem:affine-G} it suffices to consider primitive $S$. For
$t=3$, $r=2$ the homothety numbers are determined completely and the
ratio is constant (Proposition~\ref{prop:flat32}); for $t\ge4$ or
$r\ge3$ no closed form is known and the values (e.g.\
Table~\ref{tab:gallai}) are computed by SAT. By
Proposition~\ref{prop:flat32} the maximum is weight-independent at $(3,2)$;
at $(3,3)$ it is at least $16$ and at most $\HJ_{\mathrm{sym}}(3,3)$
(Theorem~\ref{thm:hj33-16}, Remark~\ref{rem:closed-vs-reach}).
\emph{(ii)} Is there structure behind the champions? At $(4,2)$, scanning
non-arithmetic weights with entries $\le6$ singles out exactly $(0,2,3,5)$
and $(0,1,5,6)$ as reaching $n=13$, and each is a single orbit of an
order-$4$ affine map on $\Z_{13}$ ($5^2\equiv-1$): $\{0,3,5,2\}$ under
$x\mapsto5x+3$ and $\{0,1,6,5\}$ under $x\mapsto5x+1$ (verified). The weight set, viewed in $\Z_{13}$, is then a single orbit of a cyclic
subgroup of $\mathrm{AGL}(1,13)$; by Lemma~\ref{lem:palindromy} both
champions are accordingly reflection-symmetric, $S=D-S$. The data of
Section~\ref{sec:landscape} refines the picture: the orbit triples
$\{0,1,3\}$ (orbit of $x\mapsto2x+1$ on $\Z_7$) and $\{0,1,4\}$ (orbit of
$x\mapsto3x+1$ on $\Z_{13}$) reach only ratio $14$ at $(3,3)$, while the
non-orbit $\{0,2,5\}$ reaches $16$. Orbit structure thus appears to govern
\emph{compressibility} --- the existence of small-modulus palettes, the
mechanism of Remark~\ref{rem:anatomy} --- rather than the full-interval
ratio. Does it characterize the reach champions, and does the palindromic
class contain a ratio maximizer for every even $|S|$?
\emph{(iii)} The periodic one-weight ceiling is $K=12$ at both $(3,3)$ and
$(4,2)$, in each case at modulus $13$~\cite{thesis}. Do non-periodic
palettes give $\kow>12$ there --- and is $\kow<\ksym$ anywhere? In
the (non-exhaustive) range of Remark~\ref{rem:gap33} no periodic palette
exceeds $n=14$ at $(3,3)$, and the length-$76$
certificate of Theorem~\ref{thm:hj33-16} is visibly quasi-periodic, agreeing
with its own shift by $19$ in $55$ of $57$ positions. Do
\emph{periodic-with-defects} palettes close the gap between the periodic
ceiling and the symmetric reach?
\emph{(iv)} Can the Cyclic Zipper
Method~\cite{HerwigHeuleZipper2007,RabungLotts2012} be adapted to periodic
palettes to close the gaps $\ksum(5,2)\in[36,43]$ and
$\ksum(6,2)\in[138,225]$?
\end{question}
 
\begin{question}[Interval numbers]\label{q:interval}
The ceilings at $t=3$ are settled (Proposition~\ref{prop:lambda3}); the
finite interval numbers are not. Beyond $\HJ^{(1)}(3,2)=5$ (the remaining
$(3,2)$ values are trivially $4$), the first unknown value is
$\HJ^{(3)}(3,3)$, where the only upper bounds come from Shelah's argument.
For $t\ge4$ nothing is known beyond
$\lambda(t,r)\le\min\{\lceil\HJ(t,r)/2\rceil,\,\HJ(t-1,r)\}-1$; by
Proposition~\ref{prop:sym-blind} any lower bound requires genuinely
asymmetric constructions.
\end{question}
 
\begin{question}[Frontiers]\label{q:frontiers}
\emph{(i)} Decide the symmetric simplex instance at $(3,3)$, $n=22$ ($276$
cells, $2024$ corner triples): satisfiable would give $\HJ(3,3)\ge23$,
unsatisfiable would give $\HJ_{\mathrm{sym}}(3,3)=22$ exactly.
\emph{(ii)} Decide the symmetric instance at $(4,2)$, $n=14$ ($680$ cells,
$2380$ quadruples): unsatisfiable would give $\HJ_{\mathrm{sym}}(4,2)=14$,
making $\HJ(4,2)=14$ a candidate exact value under
Conjecture~\ref{conj:sym}; satisfiable would beat the record via a radix
weight (Theorem~\ref{thm:radix}).
\emph{(iii)} Between the simplex and the cube sits the block-symmetric
ladder: for a partition of $[n]$ into $b$ blocks, colorings invariant under
the corresponding Young subgroup satisfy
$\HJ_{\mathrm{sym}}=\HJ_{1\text{-}\mathrm{sym}}\le
\HJ_{2\text{-}\mathrm{sym}}\le\dots\le\HJ(t,r)$, with equality throughout
iff Conjecture~\ref{conj:sym} holds; a strict step at any rung refutes it
while improving the lower bound. By Theorem~\ref{thm:radix} there is no
multi-weight tier, so this ladder is the genuine hierarchy above the
symmetric reach.
\end{question}
 
\begin{remark}[Why the frontiers resist]\label{rem:hardness}
The wall is not an artifact of weak solvers. One dimension past the $(4,2)$
record, the level instance of the weight $(0,2,3,5)$ is unsatisfiable, and
its refutations are provably global: minimal unsatisfiable subsets range
over $179$--$270$ of its $443$ homothets with no canonical core, the primal
graph has treewidth at least $32$, and the instance admits no
$\mathrm{GF}(2)$ Nullstellensatz refutation of degree $\le5$ --- yet a
MaxSAT dual shows some $2$-coloring leaves only \emph{three} monochromatic
homothets, a minimum that the record palette of Theorem~\ref{thm:hj42-14}
attains at $n=14$ (verified). Globally irreducible across proof systems,
yet three constraints from feasible: this is the signature of the boundary,
and the one structure all these measures ignore --- the $S_n$-symmetry
exploited throughout this note --- is where an advance must
come from. See~\cite{thesis} for the full analysis.
\end{remark}
 
\printbibliography
 
\end{document}